\tikzstyle{path}=[draw, line width=1.2, color=black]
\tikzstyle{pathlight}=[draw, line width=1, dotted, color=lightgray]
\newtheorem{theorem}{Theorem}[section]
\newtheorem{corollary}[theorem]{Corollary}
\newtheorem{lemma}[theorem]{Lemma}
\theoremstyle{definition}
\newtheorem{definition}[theorem]{Definition}
\newtheorem*{definition*}{Definition}
\def\L{\mathcal{L}}
\def\s{\circledast}
\def\E{\mathsf{E}}
\def\N{\mathsf{N}}
\def\EE{\mathsf{ee}}
\def\NN{\mathsf{nn}}
\def\NE{\mathsf{ne}}
\def\EN{\mathsf{en}}
\title{Bounce statistics for rational lattice paths}
\author{Daniel Birmajer}
\address{Department of Mathematics\\ Nazareth College\\ 4245 East Ave.\\ Rochester, NY 14618}
\author{Juan B. Gil}
\address{Penn State Altoona\\ 3000 Ivyside Park\\ Altoona, PA 16601}
\author{Michael D. Weiner}
\subjclass[2010]{05A15 (Primary); 05C99 (Secondary)}
\begin{document}
\maketitle

\begin{abstract}
Given two relatively prime positive integers $\alpha$ and $\beta$, we consider simple lattice paths (with unit East and unit North steps) from $(0,0)$ to $(\alpha k,\beta k)$, and enumerate them by their left and right bounces with respect to the line $y=\frac{\beta}{\alpha} x$. We give the corresponding multivariate generating functions for all such paths as well as for subclasses of paths that start and end with a prescribed step. For illustration purposes, we discuss the case $\beta=1$ and express some of our functions in terms of the Fuss-Catalan generating function $c_\alpha(x)$.
\end{abstract}

\section{Introduction}

Motived by recent work of Pan and Remmel \cite{PanRe16}, we consider the class $\L_{\beta/\alpha}(k)$ of simple lattice paths (with east `$\E$' and north `$\N$' unit steps) from $(0,0)$ to $(\alpha k,\beta k)$, where $k$, $\alpha$, and $\beta$ are positive integers with $\gcd(\alpha, \beta) = 1$. An $\mathsf{AB}$-path is such a lattice path that starts with an $\mathsf{A}$-step and ends with a $\mathsf{B}$-step with $\mathsf{A,B}\in\{\E,\N\}$. 

\begin{figure}[ht]
 \begin{tikzpicture}[scale=0.45]
 \begin{scope}
 \draw [step=1,gray!60] (0,0) grid (4,6);
 \draw [pathlight] (0,0) -- (4,6);
 \draw [path] (0,0) -- (2,0) -- (2,3) -- (3,3) -- (3,6) -- (4,6);
 \node[below=1pt] at (2,0) {\small $\E\E$-path};
 \end{scope}

 \begin{scope}[xshift=75mm]
 \draw [step=1,gray!60] (0,0) grid (4,6);
 \draw [pathlight] (0,0) -- (4,6);
 \draw [path] (0,0) -- (0,2) -- (2,2) -- (3,2) -- (3,4) -- (4,4) -- (4,6);
 \node[below=1pt] at (2,0) {\small $\N\N$-path};
 \end{scope}

 \begin{scope}[xshift=150mm]
 \draw [step=1,gray!60] (0,0) grid (4,6);
 \draw [pathlight] (0,0) -- (4,6);
 \draw [path] (0,0) -- (1,0) -- (1,3) -- (2,3) -- (2,5) -- (4,5) -- (4,6);
 \node[below=1pt] at (2,0) {\small $\E\N$-path};
 \end{scope}
 \end{tikzpicture}
\caption{Examples of lattice paths in $\L_{3/2}(2)$.}
\end{figure}
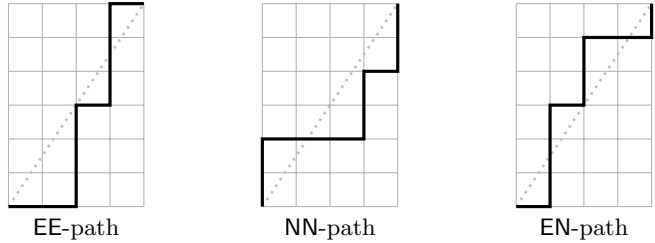

It is easy to see that there are $\binom{(\alpha+\beta)k}{\alpha k}$ lattice paths in $\L_{\beta/\alpha}(k)$, so we get the generating function
\begin{equation*}
   g(x)=\sum_{k=1}^{\infty}\binom{(\alpha+\beta)k}{\alpha k}x^k,
\end{equation*}
and similarly we have
\begin{equation*}
g_{\EE}(x)=\sum_{k=1}^{\infty}\tbinom{(\alpha+\beta)k-2}{\alpha k-2}x^k, \;\; 
g_{\EN}(x)=\sum_{k=1}^{\infty}\tbinom{(\alpha+\beta)k-2}{\alpha k-1}x^k, \;\; 
g_{\NN}(x)=\sum_{k=1}^{\infty}\tbinom{(\alpha+\beta)k-2}{\alpha k}x^k,
\end{equation*}
where  $g_{\mathsf{ab}}(x)$ denotes the generating function enumerating $\mathsf{AB}$-paths. Note that $g_{\NE}(x)=g_{\EN}(x)$, and
\begin{equation}\label{eq:gRelations}
 \beta \big(g_{\EE}(x) + g_{\EN}(x)\big) = \alpha \big(g_{\NN}(x) + g_{\EN}(x)\big).
\end{equation}
In addition to counting the number of lattice paths in $\L_{\beta/\alpha}(k)$ that start with an $\E$-step, the function $g_{\E\s}(x) = g_{\EE}(x) + g_{\EN}(x)$ also enumerates ordered partitions (weak compositions) of $\beta k$ into $\alpha k$ parts. There is a similar interpretation for $g_{\N\s}(x) = g_{\NN}(x) + g_{\EN}(x)$.

\begin{definition} \label{def:GFs}
We say that a path $L$ has a ``left bounce'' (resp.\ ``right bounce'') if a vertex of the form $\E\N$ (resp.\ $\N\E$) on $L$ touches the line $y=\frac{\beta}{\alpha}x$, see Figure~\ref{fig:bounces}. A path is said to be {\em bounce-free} with respect to $y=\frac{\beta}{\alpha}x$ if it has no bounces (left or right) at that line, see Figure~\ref{fig:bounceFree}. We let $f(x)$ be the generating function enumerating bounce-free rational paths with respect to the line $y=\frac{\beta}{\alpha}x$, and let $f_{\mathsf{ab}}(x)$ denote the corresponding function for bounce-free $\mathsf{AB}$-paths. 
\end{definition}

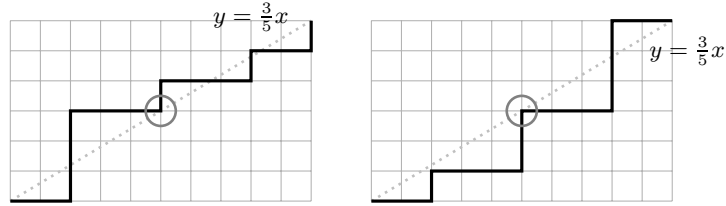
\begin{figure}[ht]
 \begin{tikzpicture}[scale=0.4]
 \begin{scope}
 \draw [step=1,gray!60] (0,0) grid (10,6);
 \draw [pathlight] (0,0) -- (10,6);
 \draw [path] (0,0) -- (2,0) -- (2,3) -- (5,3) -- (5,4) -- (8,4) -- (8,5) -- (10,5) -- (10,6);
 \draw[line width=1pt,gray] (5,3) circle (0.5);
 \node[above=4pt] at (8,5) {\small $y=\frac35 x$};
 \end{scope}
 \begin{scope}[xshift=120mm]
 \draw [step=1,gray!60] (0,0) grid (10,6);
 \draw [pathlight] (0,0) -- (10,6);
 \draw [path] (0,0) -- (2,0) -- (2,1) -- (5,1) -- (5,3) -- (8,3) -- (8,6) -- (10,6);
 \draw[line width=1pt,gray] (5,3) circle (0.5);
 \node at (10.5,5) {\small $y=\frac35 x$};
 \end{scope}
 \end{tikzpicture}
\caption{Examples of a left and a right bounce, respectively.}
\label{fig:bounces}
\end{figure}

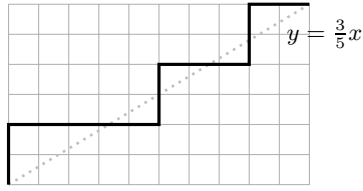
\begin{figure}[ht]
 \begin{tikzpicture}[scale=0.4]
 \draw [step=1,gray!60] (0,0) grid (10,6);
 \draw [pathlight] (0,0) -- (10,6);
 \draw [path] (0,0) -- (0,2) -- (5,2) -- (5,4) -- (8,4) -- (8,6) -- (10,6);
 \node at (10.5,5) {\small $y=\frac35 x$};
 \end{tikzpicture}
\caption{Example of a bounce-free $\N\E$-path in $\L_{3/5}(2)$.}
\label{fig:bounceFree}
\end{figure}

The main goal of this paper is to derive generating functions $G_{\mathsf{ab}}(x,s,t)$ that count $\mathsf{AB}$-paths in $\L_{\beta/\alpha}(k)$, where the coefficient of $s^\ell t^r x^k$ gives the number of lattice paths from $(0,0)$ to $(\alpha k,\beta k)$ having exactly $\ell$ left bounces and $r$ right bounces. For this purpose, we first obtain expressions for the bounce-free functions $f(x)$ and $f_{\mathsf{ab}}(x)$ in terms of the known functions $g(x)$ and $g_{\mathsf{ab}}(x)$. 

Finally, in Section~\ref{sec:beta=1} we discuss the particular case $\alpha\ge 1$, $\beta=1$ (which by symmetry is similar to the case $\alpha=1$, $\beta\ge 1$) and give expressions for the bounce-free generating functions in terms of the Fuss-Catalan function $c_{\alpha}(x)$.

\section{Bounce-free rational lattice paths}

Throughout this section we fix $\alpha, \beta\in\mathbb{N}$ such that $\gcd(\alpha,\beta)=1$. As before, we let $g_{\mathsf{ab}}(x)$ denote the generating function for the number of $\mathsf{AB}$-paths from $(0,0)$ to $(\alpha k,\beta k)$, and we let $f_{\mathsf{ab}}(x)$ be the corresponding function for the $\mathsf{AB}$-paths that are bounce-free with respect to the line $y=\frac{\beta}{\alpha}x$.

\begin{lemma}
The generating function for the number of\, $\E\N$-paths with no right bounces is given by
\begin{equation} \label{eq:lemmaOne}
  \textup{nrb}_{\EN}(x) = f_{\EN}(x)+\frac{f_{\EE}(x)f_{\NN}(x)}{1-f_{\EN}(x)} = \frac{g_{\EN}(x)}{1+g_{\EN}(x)}.
\end{equation}
This is the same function as for the number of\, $\E\N$-paths with no left bounces.
\end{lemma}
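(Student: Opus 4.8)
The plan is to establish the three claims of the lemma — that $\textup{nrb}_{\EN}$ equals $f_{\EN}+\frac{f_{\EE}f_{\NN}}{1-f_{\EN}}$, that it equals $\frac{g_{\EN}}{1+g_{\EN}}$, and that it also counts the paths with no left bounce — by three short cut‑and‑paste arguments resting on two preliminary remarks. First, since $\gcd(\alpha,\beta)=1$, the only lattice points on the segment from $(0,0)$ to $(\alpha k,\beta k)$ are the diagonal points $D_j=(\alpha j,\beta j)$, $0\le j\le k$; and a bounce at $D_j$ is exactly an $\E\N$-corner (left) or an $\N\E$-corner (right) there, while an $\E\E$- or $\N\N$-corner at $D_j$, or a crossing of the line in the interior of a step, is \emph{not} a bounce. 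Second, reversing the step sequence of a path together with the point reflection through the centre $\bigl(\tfrac{\alpha k}{2},\tfrac{\beta k}{2}\bigr)$ is an involution of $\L_{\beta/\alpha}(k)$ that keeps every step an $\E$- (resp.\ $\N$-) step while reversing the order in which the steps occur; hence it interchanges $\E\N$- and $\N\E$-paths (and fixes $\E\E$- and $\N\N$-paths, so $f_{\NE}=f_{\EN}$), fixes the line, and turns every left bounce into a right bounce and vice versa. Carrying an $\E\N$-path with no right bounce over to an $\N\E$-path with no left bounce settles the final assertion (the bounce‑interchanging involution also exchanging $\E\N$- and $\N\E$-paths).

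For $\textup{nrb}_{\EN}=f_{\EN}+\frac{f_{\EE}f_{\NN}}{1-f_{\EN}}$ I would cut an $\E\N$-path $L$ with no right bounce at the diagonal points $D_{j_1}<\dots<D_{j_r}$ carrying its left bounces (these are all interior, $1\le j_i\le k-1$, since a left bounce has an incoming and an outgoing step). If $r=0$, $L$ is bounce-free and contributes to $f_{\EN}$. If $r\ge1$, the cuts yield: a first piece from $(0,0)$ to $D_{j_1}$ that begins with $\E$ (the first step of $L$) and ends with $\E$ (the step into the first left bounce), hence an $\E\E$-path; then $r-1$ pieces running between consecutive left bounces, each beginning with $\N$ and ending with $\E$, hence $\N\E$-paths; and a last piece from $D_{j_r}$ to $(\alpha k,\beta k)$ beginning with $\N$ and ending with $\N$, hence an $\N\N$-path. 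Every piece is bounce-free — no right bounce (inherited from $L$) and no left bounce (an interior one would be a left bounce of $L$ strictly between two consecutive $D_{j_i}$) — and conversely any concatenation of a bounce-free $\E\E$-piece, then $r-1$ bounce-free $\N\E$-pieces, then a bounce-free $\N\N$-piece, glued at diagonal points, produces exactly $r$ $\E\N$-corners on the line (the junctions) and no $\N\E$-corner on it. Using $f_{\NE}=f_{\EN}$ and summing over $r$ gives $f_{\EN}+f_{\EE}f_{\NN}\sum_{r\ge1}f_{\EN}^{r-1}=f_{\EN}+\frac{f_{\EE}f_{\NN}}{1-f_{\EN}}$.

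For $\textup{nrb}_{\EN}=\frac{g_{\EN}}{1+g_{\EN}}$ I would prove the equivalent identity $\textup{nrb}_{\EN}\bigl(1+g_{\EN}\bigr)=g_{\EN}$ by matching the $\E\N$-paths with at least one right bounce against pairs consisting of an $\E\N$-path with no right bounce followed by an arbitrary $\E\N$-path. Cut such a path at its \emph{first} right bounce $D_j$: the initial segment, up to and including the $\N$-step entering $D_j$, begins with $\E$, ends with $\N$, and has no right bounce (an earlier one would contradict minimality of $j$, and the segment leaves $D_j$ only through the ambient $\N$-step, not through an $\E$-step of its own), while the remaining segment, beginning with the $\E$-step leaving $D_j$, ends with $\N$ and is unconstrained. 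The inverse concatenates, the junction creating an $\N\E$-corner on the line that is automatically the first right bounce. Solving $\textup{nrb}_{\EN}+\textup{nrb}_{\EN}g_{\EN}=g_{\EN}$ yields $\textup{nrb}_{\EN}=g_{\EN}/(1+g_{\EN})$ and completes the chain of equalities.

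The content is not deep; the work lies in checking that these cut‑and‑paste maps are genuine, length‑preserving bijections. Three points need care: ``bounce-free'' for an intermediate piece must mean free of bounces of \emph{both} kinds, which is exactly why one records that the interior diagonal points of a piece carry harmless $\E\E$- or $\N\N$-corners; reassembly must never create an unintended bounce and must always create the intended one; and all pieces must have length $\ge1$ — guaranteed because every cut point is a true corner lying on the line, so $1\le j_i\le k-1$ — so that the geometric series $\tfrac1{1-f_{\EN}}$ and $\tfrac1{1+g_{\EN}}$ (valid since $f_{\EN}$ and $g_{\EN}$ have no constant term) cause no trouble. Keeping straight the $\E\E/\N\E/\N\N$ pattern of the pieces in the first decomposition is the most error-prone step.
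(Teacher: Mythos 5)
Your proof is correct. For the first equality you use exactly the paper's argument: cut an $\E\N$-path with no right bounces at its left bounces to obtain a bounce-free $\E\E$-segment, a run of bounce-free $\N\E$-segments, and a bounce-free $\N\N$-segment, giving $f_{\EN}+f_{\EE}f_{\NN}/(1-f_{\EN})$; you are merely more explicit about why the pieces have the stated types and why the gluing is reversible. For the second equality you take a genuinely different route. The paper applies inclusion--exclusion to the sets $A_i$ of $\E\N$-paths with a right bounce at $(\alpha i,\beta i)$, notes that prescribing $m$ right bounces contributes $g_{\EN}(x)^{m+1}$, and sums the alternating series to get $g_{\EN}/(1+g_{\EN})$. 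You instead cut at the \emph{first} right bounce to get the bijection ``$\E\N$-path with a right bounce $\leftrightarrow$ ($\E\N$-path with no right bounce)$\,\times\,$($\E\N$-path)'', i.e.\ the functional equation $g_{\EN}=\textup{nrb}_{\EN}\,(1+g_{\EN})$. The two are algebraically equivalent; your version buys a direct, sign-free bijection at the cost of having to check that the junction really is the first right bounce of the reassembled path (which you do), while the paper's version generalizes mechanically to the $\E\E$- and $\N\N$-cases of the next lemma. One point worth flagging: the lemma's closing sentence, read literally as ``$\E\N$-paths with no \emph{left} bounces,'' is not what either argument proves --- for $\alpha=\beta=1$, $k=2$ both $\E\N$-paths ($\E\E\N\N$ and $\E\N\E\N$) are free of left bounces but only one is free of right bounces. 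Your reversal-plus-central-reflection involution proves the correct statement, namely that $\textup{nrb}_{\EN}$ also counts $\N\E$-paths with no left bounces, which is evidently what is intended via the identification $g_{\NE}=g_{\EN}$, $f_{\NE}=f_{\EN}$; it is good that you supplied this involution explicitly, since the paper asserts the symmetry without proof.
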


\begin{proof}
First, recall that the number of bounce-free $\E\N$-paths is given by $f_{\EN}(x) = f_{\NE}(x)$. On the other hand, any left bounce on the line $y=\frac{\beta}{\alpha}x$ breaks up the path into two path segments: an $\E\E$-path segment and an $\N\N$-path segment:

\begin{center}
\begin{tikzpicture}[scale=0.4]
 \draw [step=1,gray!60] (0,0) grid (10,6);
 \draw [pathlight] (0,0) -- (10,6);
 \draw [path] (0,0) -- (3,0) -- (3,3) -- (5,3);
 \draw [path,blue] (5,3) -- (5,4) -- (9,4) -- (9,5) -- (10,5) -- (10,6);
 \draw[fill,red] (5,3) circle (0.15);
 \node at (8,6.2) {\small $y=\frac{\beta}{\alpha} x$};
\end{tikzpicture}
\end{center}

If a lattice path $P$ has exactly $m\ge 1$ left bounces and no right bounces, we can represent $P$ as a concatenation of $m+1$ bounce-free path segments
\[ P = P_1^{\EE}\, P_2^{\NE} \cdots P_{m}^{\NE}\, P_{m+1}^{\NN}, \]
each one starting and ending at the line $y=\frac{\beta}{\alpha}x$. Here $P_i^{\mathsf{ab}}$ denotes a bounce-free path starting with an $\mathsf{A}$-step and ending with a $\mathsf{B}$-step. Therefore, the total number of paths $P$ with no right bounces and having at least one left bounce is given by the generating function
\begin{equation*}
 f_{\EE}(x)\bigg(\sum_{j=0}^{\infty}f_{\EN}(x)^j\bigg)f_{\NN}(x)=\frac{f_{\EE}(x)f_{\NN}(x)}{1-f_{\EN}(x)}.  
\end{equation*}
In conclusion, $f_{\EN}(x) + \frac{f_{\EE}(x)f_{\NN}(x)}{1-f_{\EN}(x)}$ gives the number of $\E\N$-paths that contain no right bounces.

Let us now examine the right-hand side of \eqref{eq:lemmaOne}. Recall that $g_{\EN}(x)$ is the generating function for the set $\L_{\beta/\alpha}^{\EN}$ of all $\E\N$-paths from $(0,0)$ to a point on the line $y=\frac{\beta}{\alpha} x$. For $i\ge 1$, consider the set
\[ A_i = \{\text{paths in } \L_{\beta/\alpha}^{\EN} \text{ with a right bounce at the point } (\alpha i, \beta i)\}. \]
By the inclusion-exclusion principle, the number of paths with no right bounces at the points $(\alpha i, \beta i)$ for $i=1,\dots,m$, is given by
\begin{equation}\label{eq:incl-excl}
 |\L_{\beta/\alpha}^{\EN}| - \sum |A_i| + \sum |A_i\cap A_j| - \sum |A_i\cap A_j\cap A_k| + \dots + (-1)^m |A_1\cap \dots \cap A_m|.
\end{equation}
Since the set of $\E\N$-paths with $m$ right bounces has the generating function $g_{\EN}(x)^{m+1}$, we get that the generating function for \eqref{eq:incl-excl} is $g_{\EN}(x) - g_{\EN}(x)^{2} + \dots + (-1)^m g_{\EN}(x)^{m+1}$. Taking the limit as $m\to\infty$, we get that
\[ \sum_{i=0}^\infty (-1)^i g_{\EN}(x)^{i+1} = \frac{g_{\EN}(x)}{1+g_{\EN}(x)} \]
gives the number of $\E\N$-paths that contain no right bounces. This proves \eqref{eq:lemmaOne}.
\end{proof}

\begin{lemma} 
The generating functions for the number of\, $\E\E$-paths with no right bounces and for the number of\, $\N\N$-paths with no right bounces, respectively, are given by
\begin{equation} \label{eq:lemmaTwo}
 \textup{nrb}_{\EE}(x) = \frac{f_{\EE}(x)}{1-f_{\EN}(x)} = \frac{g_{\EE}(x)}{1+g_{\EN}(x)} \;\text{ and }\;
 \textup{nrb}_{\NN}(x) = \frac{f_{\NN}(x)}{1-f_{\EN}(x)} = \frac{g_{\NN}(x)}{1+g_{\EN}(x)}.
\end{equation}
These functions also give the number of corresponding paths with no left bounces.
\end{lemma}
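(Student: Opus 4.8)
The plan is to mimic the two-part argument used for Lemma~1 (the $\EN$ case), handling the combinatorial side and the inclusion–exclusion side separately and then matching them.

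For the combinatorial side, I would decompose an $\EE$-path with no right bounces according to its left bounces. If such a path has $m\ge 0$ left bounces at the line $y=\frac{\beta}{\alpha}x$, then splitting at those bounce points writes it as a concatenation $P_1^{\EE}\,P_2^{\NE}\cdots P_{m+1}^{\NE}$ of $m+1$ bounce-free segments: the first segment starts with an $\E$-step (so it is an $\EE$-segment, since the only way to begin and not bounce is to stay on one side), and every later segment begins with an $\N$-step immediately after a left bounce; the last segment must end with an $\E$-step because the whole path is an $\EE$-path, and since there are no right bounces it ends without touching the line again, i.e.\ it is an $\NE$-segment (for $m\ge 1$) or the single $\EE$-segment (for $m=0$). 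Summing over $m$ gives the generating function $f_{\EE}(x)\sum_{j\ge 0} f_{\EN}(x)^{j} = \frac{f_{\EE}(x)}{1-f_{\EN}(x)}$, using $f_{\NE}(x)=f_{\EN}(x)$. The $\NN$ case is the mirror image: an $\NN$-path with no right bounces splits as $P_1^{\NN}\,P_2^{\NE}\cdots$? — here I must be a bit careful about which step-types appear; the cleanest route is to note that reading a lattice path from $(\alpha k,\beta k)$ back to $(0,0)$ (i.e.\ reversing steps and swapping $\E\leftrightarrow$ reversed-$\E$) interchanges left and right bounces and interchanges $\EE$ with $\NN$, so the $\NN$ statement follows formally from the $\EE$ statement, or one repeats the direct decomposition to get $f_{\NN}(x)\sum_{j\ge0}f_{\EN}(x)^j$.

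For the arithmetic side, I would run exactly the same inclusion–exclusion as in Lemma~1. Let $\L_{\beta/\alpha}^{\EE}$ be the set of all $\EE$-paths from $(0,0)$ to a lattice point on the line, with generating function $g_{\EE}(x)$, and for $i\ge1$ let $B_i$ be the subset having a right bounce at $(\alpha i,\beta i)$. An $\EE$-path with exactly $m$ right bounces factors as an $\EE$-path to the first bounce followed by $m$ further $\EN$-to-$\EE$-type pieces — more precisely its generating function is $g_{\EE}(x)\,g_{\EN}(x)^{m}$, since after a right bounce the path restarts with an $\E$-step and between consecutive right bounces it is an $\E\cdots\N$-to-$\E\cdots\N$ configuration? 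Here the right step-type bookkeeping is: a right bounce is an $\N\E$ vertex on the line, so the segment between two consecutive right bounces starts with $\E$ and ends with $\N$, i.e.\ contributes $g_{\EN}(x)$, while the final segment after the last right bounce starts with $\E$ and ends with $\E$, contributing $g_{\EE}(x)$ — wait, that gives $g_{\EE}(x)\cdot g_{\EN}(x)^{m}$ only if the \emph{first} segment is the $g_{\EE}$ one; but the first segment ends at a right bounce so it ends with $\N$, contradiction. The correct factorization is: first segment (before first right bounce) is $\EN$, each middle segment is $\EN$, last segment is $\EE$, giving $g_{\EN}(x)^{m} g_{\EE}(x)$ — consistent. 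So the generating function of $\EE$-paths with exactly $m$ right bounces is $g_{\EN}(x)^{m} g_{\EE}(x)$, and inclusion–exclusion yields $\sum_{m\ge0}(-1)^m g_{\EN}(x)^m g_{\EE}(x) = \frac{g_{\EE}(x)}{1+g_{\EN}(x)}$, as claimed; the $\NN$ version is identical with $g_{\NN}$ in place of $g_{\EE}$.

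I would then conclude by invoking the convergence/limit justification already spelled out in Lemma~1 (the truncated inclusion–exclusion over bounces at $(\alpha i,\beta i)$ for $i\le m$ has generating function matching the target up to order $m$, and letting $m\to\infty$ gives the formal power series identity), and by the reversal symmetry remark for the "no left bounces" clause. The main obstacle — really the only subtle point — is getting the step-type labels on the segments right in both decompositions (which segment is $\EE$, which is $\EN=\NE$), since an off-by-one in the labeling would produce $f_{\EE}/(1-f_{\EN})$ versus the wrong variant; once the decomposition is pinned down, both sides collapse to geometric series and the proof is routine. It is worth noting that equality of the two closed forms, $\frac{f_{\EE}}{1-f_{\EN}} = \frac{g_{\EE}}{1+g_{\EN}}$, then records a nontrivial relation between the bounce-free functions $f_{\mathsf{ab}}$ and the known $g_{\mathsf{ab}}$, which is presumably exploited in the next section.
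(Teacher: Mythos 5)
Your proposal is correct and follows essentially the same two-pronged argument as the paper: decomposing at the left bounces gives $f_{\EE}(x)\sum_{j\ge0}f_{\NE}(x)^j=\frac{f_{\EE}(x)}{1-f_{\EN}(x)}$, and inclusion--exclusion over marked right-bounce positions, with the same segment factorization $g_{\EN}(x)^m g_{\EE}(x)$ that you eventually pin down, gives $\frac{g_{\EE}(x)}{1+g_{\EN}(x)}$. The one small slip is that step reversal fixes the classes $\EE$ and $\NN$ (it swaps $\EN\leftrightarrow\NE$ and left$\leftrightarrow$right bounces), so it does not by itself transfer the $\EE$ formula to the $\NN$ one --- the paper instead uses the reflection symmetry exchanging $(\alpha,\beta)$-$\EE$-paths with $(\beta,\alpha)$-$\NN$-paths --- but your fallback direct decomposition $f_{\NN}(x)\sum_{j\ge0}f_{\NE}(x)^j$ is exactly right, so the proof goes through.
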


\begin{proof}
First of all, it is easy to argue that 
\begin{equation*}
 f_{\EE}(x) \sum_{j=0}^{\infty}f_{\NE}(x)^j =  \frac{f_{\EE}(x)}{1-f_{\NE}(x)} = \frac{f_{\EE}(x)}{1-f_{\EN}(x)}  
\end{equation*}
counts the number of $\E\E$-paths with no right bounces. 

To show that $\frac{g_{\EE}(x)}{1+g_{\EN}(x)}$ counts the same number of paths, consider the set
\[ A_i = \{\text{paths in } \L_{\beta/\alpha}^{\EE} \text{ with a right bounce at } (\alpha i, \beta i)\}. \]
As we did in the proof of the previous lemma, we apply the inclusion-exclusion principle to conclude that the number of paths with no right bounces at the points $(\alpha i, \beta i)$ for $i=1,\dots,m$, is counted by the generating function $g_{\EE}(x) - g_{\EN}(x)g_{\EE}(x) + g_{\EN}(x)^2g_{\EE}(x) - \dots + (-1)^m g_{\EN}(x)^{m}g_{\EE}(x)$, thus
\[ \sum_{i=0}^\infty (-1)^i g_{\EN}(x)^{i}g_{\EE}(x) = \frac{g_{\EE}(x)}{1+g_{\EN}(x)} \]
also counts the number of $\E\E$-paths that contain no right bounces. This proves the first of the two equations claimed in \eqref{eq:lemmaTwo}. The second equation follows by the symmetry between the pair $(\alpha,\beta)$ with $\E\E$-paths and the pair $(\beta,\alpha)$ with $\N\N$-paths.
\end{proof}

As a consequence of the previous two lemmas, we arrive at the following result.
\begin{theorem} \label{thm:bounce_free}
Let $f_{\mathsf{ab}}(x)$ be the generating function for the number of $\mathsf{AB}$-paths that are bounce-free with respect to the line $y=\frac{\beta}{\alpha}x$. Then
\begin{gather*}
 f_{\EE}(x) = \frac{g_{\EE}(x)}{(1+g_{\EN}(x))^2 - g_{\EE}(x)g_{\NN}(x)}, \quad
 f_{\NN}(x) = \frac{g_{\NN}(x)}{(1+g_{\EN}(x))^2 - g_{\EE}(x)g_{\NN}(x)}, \\[1ex]
 f_{\EN}(x) = 1 - \frac{1+ g_{\EN}(x)}{(1+g_{\EN}(x))^2 - g_{\EE}(x)g_{\NN}(x)}.
\end{gather*}
\end{theorem}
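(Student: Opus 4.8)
The plan is to combine the two preceding lemmas, which each express a ``no right bounces'' count in two ways, to solve for the bounce-free functions $f_{\EE}$, $f_{\NN}$, $f_{\EN}$ in terms of the known $g$-functions. From the first lemma we have the identity
\[
 f_{\EN}(x) + \frac{f_{\EE}(x)f_{\NN}(x)}{1-f_{\EN}(x)} = \frac{g_{\EN}(x)}{1+g_{\EN}(x)},
\]
and from the second lemma we have
\[
 \frac{f_{\EE}(x)}{1-f_{\EN}(x)} = \frac{g_{\EE}(x)}{1+g_{\EN}(x)}, \qquad
 \frac{f_{\NN}(x)}{1-f_{\EN}(x)} = \frac{g_{\NN}(x)}{1+g_{\EN}(x)}.
\]
To lighten notation I would write $u = f_{\EN}(x)$, $a = f_{\EE}(x)$, $b = f_{\NN}(x)$, and $A = g_{\EE}(x)$, $B = g_{\NN}(x)$, $E = g_{\EN}(x)$, so the three relations become $u + \frac{ab}{1-u} = \frac{E}{1+E}$, $\frac{a}{1-u} = \frac{A}{1+E}$, and $\frac{b}{1-u} = \frac{B}{1+E}$.

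Next I would eliminate $a$ and $b$. From the last two equations, $a = \frac{A(1-u)}{1+E}$ and $b = \frac{B(1-u)}{1+E}$, hence $\frac{ab}{1-u} = \frac{AB(1-u)}{(1+E)^2}$. Substituting into the first relation gives a single equation in $u$:
\[
 u + \frac{AB(1-u)}{(1+E)^2} = \frac{E}{1+E}.
\]
Multiplying through by $(1+E)^2$ and collecting the coefficient of $u$, one gets $u\big((1+E)^2 - AB\big) = E(1+E) - AB$, so
\[
 u = \frac{E(1+E) - AB}{(1+E)^2 - AB} = 1 - \frac{(1+E)^2 - AB - E(1+E) + AB}{(1+E)^2 - AB} = 1 - \frac{1+E}{(1+E)^2 - AB},
\]
which is exactly the claimed formula for $f_{\EN}(x)$ after unwinding the abbreviations. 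Then $1 - u = \frac{1+E}{(1+E)^2 - AB}$, and plugging this back into $a = \frac{A(1-u)}{1+E}$ and $b = \frac{B(1-u)}{1+E}$ yields $f_{\EE}(x) = \frac{A}{(1+E)^2 - AB}$ and $f_{\NN}(x) = \frac{B}{(1+E)^2 - AB}$, as stated.

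This argument is essentially just linear algebra in three unknowns once the two lemmas are in hand, so there is no real obstacle — the one point that deserves a word of care is that the manipulations take place in the ring of formal power series, where division by $1-u$ and by $(1+E)^2 - AB$ is legitimate because each of these series has constant term $1$ (note $E = g_{\EN}(x)$ and $A = g_{\EE}(x)$, $B = g_{\NN}(x)$ all have zero constant term, since there are no paths with $k=0$), and hence is invertible. I would state this invertibility remark explicitly before performing the substitutions, and otherwise present the three displays above as the complete proof.
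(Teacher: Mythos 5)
Your proposal is correct and matches the paper's intent exactly: the paper states Theorem~\ref{thm:bounce_free} simply ``as a consequence of the previous two lemmas,'' and the elimination you carry out (solving the three lemma identities for $f_{\EE}$, $f_{\NN}$, $f_{\EN}$ as formal power series) is precisely the omitted computation, with the algebra checking out. Your remark about invertibility of $1-f_{\EN}(x)$ and $(1+g_{\EN}(x))^2-g_{\EE}(x)g_{\NN}(x)$ in the formal power series ring is a sensible addition, since all the $g$-series have zero constant term.
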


\begin{corollary} \label{cor:bounce_free}
If $f(x)$ enumerates the bounce-free lattice paths in $\L_{\beta/\alpha}(k)$, then
\begin{equation*}
 f(x) =\frac{g(x) + 2\left(g_{\EN}(x)^2 - g_{\EE}(x)g_{\NN}(x)\right)}{(1+g_{\EN}(x))^2 - g_{\EE}(x)g_{\NN}(x)}.
\end{equation*}
\end{corollary}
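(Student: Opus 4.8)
The plan is to express the total bounce-free generating function $f(x)$ in terms of the four bounce-free functions $f_{\EE}, f_{\EN}, f_{\NE}, f_{\NN}$, and then substitute the formulas from Theorem~\ref{thm:bounce_free}. Every bounce-free path falls into exactly one of the classes $\EE$, $\EN$, $\NE$, $\NN$ according to its first and last step, so $f(x) = f_{\EE}(x) + f_{\EN}(x) + f_{\NE}(x) + f_{\NN}(x)$. Since $f_{\NE}(x) = f_{\EN}(x)$ (reflecting a path across the line $y = \frac{\beta}{\alpha}x$ swaps these two classes, as already used in the proof of the first lemma), this simplifies to $f(x) = f_{\EE}(x) + f_{\NN}(x) + 2f_{\EN}(x)$.

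Next I would substitute the three expressions from Theorem~\ref{thm:bounce_free}. Writing $D = (1+g_{\EN}(x))^2 - g_{\EE}(x)g_{\NN}(x)$ for the common denominator, we have
\[
 f_{\EE}(x) + f_{\NN}(x) = \frac{g_{\EE}(x) + g_{\NN}(x)}{D}, \qquad
 2f_{\EN}(x) = 2 - \frac{2(1+g_{\EN}(x))}{D} = \frac{2D - 2(1+g_{\EN}(x))}{D}.
\]
Adding these and using $g(x) = g_{\EE}(x) + 2g_{\EN}(x) + g_{\NN}(x)$ (since $g_{\NE} = g_{\EN}$), the numerator becomes
\[
 g_{\EE}(x) + g_{\NN}(x) + 2D - 2 - 2g_{\EN}(x) = g(x) - 4g_{\EN}(x) + 2D - 2.
\]
It then remains to simplify $2D - 2 - 4g_{\EN}(x)$. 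Expanding, $2D - 2 = 2(1 + 2g_{\EN} + g_{\EN}^2 - g_{\EE}g_{\NN}) - 2 = 4g_{\EN} + 2g_{\EN}^2 - 2g_{\EE}g_{\NN}$, so $2D - 2 - 4g_{\EN} = 2(g_{\EN}^2 - g_{\EE}g_{\NN})$, which yields exactly the claimed numerator $g(x) + 2(g_{\EN}(x)^2 - g_{\EE}(x)g_{\NN}(x))$ over $D$.

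This argument is essentially a bookkeeping computation, so I do not anticipate a genuine obstacle; the only point requiring a little care is the combinatorial decomposition $f = f_{\EE} + f_{\NN} + 2f_{\EN}$, which implicitly uses that the empty path is excluded (all generating functions here start at $k = 1$) so that the four first-step/last-step classes genuinely partition the set of nonempty bounce-free paths, together with the reflection symmetry $f_{\NE} = f_{\EN}$. Once that decomposition is in hand, the rest is algebraic simplification over the common denominator $D$.
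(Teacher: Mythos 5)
Your proposal is correct and is precisely the derivation the paper leaves implicit: the paper states the corollary as an immediate consequence of Theorem~\ref{thm:bounce_free}, and the intended route is the decomposition $f = f_{\EE} + 2f_{\EN} + f_{\NN}$ (using $f_{\NE}=f_{\EN}$ and $g = g_{\EE}+2g_{\EN}+g_{\NN}$) followed by the algebraic simplification over the common denominator, which you carry out correctly.
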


\section{Left-bounce and right-bounce statistics}

Let $b_{\,\ell,r}(k)$ be the number of paths in $\L_{\beta/\alpha}(k)$ having exactly $\ell$ left bounces and $r$ right bounces on the line $y=\frac{\beta}{\alpha} x$, see Figure~\ref{fig:bounces}, and let $B_{\ell,r}(x)$ be the corresponding generating function. 

Let $f_{\mathsf{A}\s}(x)$ denote the generating function for the number of bounce-free paths that start with an $\mathsf{A}$-step, and let $f_{\s\mathsf{B}}(x)$ enumerate the bounce-free paths that end with a $\mathsf{B}$-step. Thus
\begin{equation*}
 f_{\s\E}(x) = f_{\E\s}(x) = f_{\EE}(x) + f_{\NE}(x) \:\text{ and }\: f_{\N\s}(x) = f_{\s\N}(x) = f_{\NN}(x) + f_{\NE}(x).
\end{equation*}

Clearly, $B_{0,0}(x) = f(x)$, and for $\ell,r \in\mathbb{N}$, we have
\begin{equation} \label{eq:onesidedBounces}
 B_{\ell,0} = f_{\s\E}(x) f_{\NE}(x)^{\ell-1} f_{\N\s}(x) \:\text{ and }\:
 B_{0,r} = f_{\s\N}(x)f_{\EN}(x)^{r-1} f_{\E\s}(x).
\end{equation}

\begin{lemma} \label{lem:no_leftBounces}
The generating function for the number of paths in $\L_{\beta/\alpha}(k)$ with no left (or right) bounces on the line $y=\frac{\beta}{\alpha} x$ is given by
\begin{equation*}
 \sum_{\ell=0}^\infty B_{\ell,0}(x) = \sum_{r=0}^\infty B_{0,r}(x) 
 = \frac{g(x) + g_{\EN}(x)^2 - g_{\EE}(x)g_{\NN}(x)}{1+g_{\EN}(x)}.
\end{equation*}
\end{lemma}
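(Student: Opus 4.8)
The plan is to compute $\sum_{r\ge 0} B_{0,r}(x)$, the generating function for paths in $\L_{\beta/\alpha}(k)$ with no left bounces, by inclusion--exclusion over the potential left-bounce points, in the spirit of the proofs of Lemmas~2.1 and~2.2; the sum $\sum_{\ell\ge 0} B_{\ell,0}(x)$ is then handled by the symmetric argument (or by the $(\alpha,\beta)\leftrightarrow(\beta,\alpha)$ symmetry) and yields the same expression.

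First, since $\gcd(\alpha,\beta)=1$, the only lattice points on $y=\frac{\beta}{\alpha}x$ are the points $Q_i=(\alpha i,\beta i)$ with $i\ge 1$, so a path has no left bounces exactly when it has no $\E\N$-vertex at any $Q_i$. For $i\ge 1$ let $A_i=\{L\in\L_{\beta/\alpha}(k): L \text{ has an } \E\N\text{-vertex at } Q_i\}$. The key combinatorial step is that for $i_1<\dots<i_j$ the generating function of $A_{i_1}\cap\cdots\cap A_{i_j}$ is $g_{\s\E}(x)\,g_{\EN}(x)^{j-1}\,g_{\N\s}(x)$: an $\E\N$-vertex on the line cuts a path, and prescribing one at each $Q_{i_m}$ decomposes $L$ into a first piece (from $(0,0)$, ending with $\E$), then $j-1$ pieces each starting with $\N$ and ending with $\E$, then a last piece starting with $\N$, where the pieces are otherwise unconstrained and hence counted by $g_{\s\E}$, $g_{\NE}=g_{\EN}$, and $g_{\N\s}$, respectively. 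Summing the inclusion--exclusion series gives
\[
 \sum_{r\ge 0} B_{0,r}(x) = g(x) + \sum_{j\ge 1}(-1)^j g_{\s\E}(x)\,g_{\EN}(x)^{j-1}\,g_{\N\s}(x) = g(x) - \frac{g_{\s\E}(x)\,g_{\N\s}(x)}{1+g_{\EN}(x)}.
\]

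To finish, substitute $g_{\s\E}=g_{\EE}+g_{\EN}$ and $g_{\N\s}=g_{\NN}+g_{\EN}$ and use the identity $g=g_{\EE}+2g_{\EN}+g_{\NN}$ (each path starting with $\E$ is an $\E\E$- or $\E\N$-path, dually for those starting with $\N$). Then $g_{\s\E}g_{\N\s}=g_{\EE}g_{\NN}+g_{\EN}(g_{\EE}+g_{\NN})+g_{\EN}^2=g_{\EE}g_{\NN}+g\,g_{\EN}-g_{\EN}^2$, so
\[
 g - \frac{g_{\s\E}g_{\N\s}}{1+g_{\EN}} = \frac{g(1+g_{\EN}) - g_{\EE}g_{\NN} - g\,g_{\EN} + g_{\EN}^2}{1+g_{\EN}} = \frac{g + g_{\EN}^2 - g_{\EE}g_{\NN}}{1+g_{\EN}},
\]
which is the claimed formula. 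For $\sum_{\ell\ge 0} B_{\ell,0}(x)$ the identical argument with $\N\E$-vertices (right bounces) gives $g-\frac{g_{\s\N}g_{\E\s}}{1+g_{\EN}}$, and since $g_{\s\N}g_{\E\s}=g_{\s\E}g_{\N\s}$ this equals the same thing; alternatively one invokes the symmetry exchanging $\alpha\leftrightarrow\beta$, $\E\leftrightarrow\N$, and left $\leftrightarrow$ right bounces.

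I expect the point needing most care to be the claim that $A_{i_1}\cap\cdots\cap A_{i_j}$ has generating function $g_{\s\E}g_{\EN}^{j-1}g_{\N\s}$: one must note that the pieces between consecutive prescribed $\E\N$-vertices are \emph{arbitrary} $\NE$-paths (they may touch the line again; only the bounces at the chosen $Q_i$ are forced), that these middle pieces are nonempty, and that the first and last pieces carry only a one-sided step constraint. One should also remark that the inclusion--exclusion over the infinitely many $A_i$ is justified coefficient by coefficient, since for fixed $k$ only finitely many $Q_i$ lie in $[0,\alpha k]\times[0,\beta k]$. As a cross-check one may instead sum the geometric series arising from \eqref{eq:onesidedBounces}, namely $\sum_{\ell\ge0}B_{\ell,0}=f(x)+\frac{f_{\s\E}(x)f_{\N\s}(x)}{1-f_{\NE}(x)}$, and substitute the formulas of Theorem~\ref{thm:bounce_free} and Corollary~\ref{cor:bounce_free}; after clearing denominators this reduces to a polynomial identity in $g_{\EE},g_{\EN},g_{\NN},g$ that one verifies using $g=g_{\EE}+2g_{\EN}+g_{\NN}$.
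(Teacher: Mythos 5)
Your proof is correct, but it follows a genuinely different route from the paper's. The paper computes $\sum_{\ell}B_{\ell,0}$ by summing the geometric series $f(x)+f_{\s\E}(x)f_{\N\s}(x)/(1-f_{\EN}(x))$ in the bounce-free functions, and then converts to the $g$'s by invoking Lemmas on $\textup{nrb}_{\mathsf{ab}}$ and the third identity of Theorem~\ref{thm:bounce_free}; you instead run the inclusion--exclusion over the candidate bounce points $Q_i=(\alpha i,\beta i)$ directly on the full path set, which bypasses the $f$'s and Theorem~\ref{thm:bounce_free} entirely. Your decomposition of a path with prescribed $\E\N$-vertices at $Q_{i_1},\dots,Q_{i_j}$ into a $\s\E$-piece, $j-1$ middle $\NE$-pieces, and a $\N\s$-piece is sound (all pieces are nonempty because the prescribed vertices force an $\E$ before and an $\N$ after each $Q_{i_m}$), and the algebra with $g=g_{\EE}+2g_{\EN}+g_{\NN}$ checks out. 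One small point of precision: $g_{\s\E}g_{\EN}^{j-1}g_{\N\s}$ is the generating function for the \emph{aggregate} $\sum_{i_1<\dots<i_j}|A_{i_1}\cap\cdots\cap A_{i_j}|$ (the product of series automatically sums over all compositions of $k$ into $j+1$ positive parts), not of a single intersection; you use it correctly in that aggregated sense, and the paper's own Lemmas are phrased with the same looseness. What each approach buys: yours is self-contained and extends the technique of the paper's Section~2 lemmas in a uniform way, while the paper's reuses its already-established bounce-free machinery, which is the machinery it needs anyway for the full bivariate statistic in Theorem~\ref{thm:mainGF}.
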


\begin{proof}
Using equations \eqref{eq:onesidedBounces}, \eqref{eq:lemmaOne} and \eqref{eq:lemmaTwo}, we get
\begin{align} \notag
\sum_{\ell=0}^\infty B_{\ell,0}(x) 
 &= B_{0,0}(x) + \sum_{\ell=1}^\infty f_{\s\E}(x) f_{\NE}(x)^{\ell-1} f_{\N\s}(x) \\ 
 \notag
 &= f_{\EE}(x) + f_{\NN}(x) +2f_{\EN}(x) + \frac{[f_{\EE}(x) + f_{\EN}(x)][f_{\NN}(x) + f_{\EN}(x)]}{1-f_{\EN}(x)} \\
 \label{eq:f_split}
 &= \frac{g_{\EE}(x) + g_{\NN}(x) +g_{\EN}(x)}{1+g_{\EN}(x)} +  \frac{f_{\EN}(x)}{1-f_{\EN}(x)}
\end{align}
Now, by the third equation of Theorem~\ref{thm:bounce_free}, we have
\begin{equation*}
  \frac{1}{1 - f_{\EN}(x)} = \frac{(1+g_{\EN}(x))^2 - g_{\EE}(x)g_{\NN}(x)}{1+ g_{\EN}(x)},
\end{equation*}
and therefore
\begin{equation*}
  \frac{f_{\EN}(x)}{1 - f_{\EN}(x)} = \frac{1}{1 - f_{\EN}(x)} - 1
  = \frac{g_{\EN}(x) + g_{\EN}(x)^2 - g_{\EE}(x)g_{\NN}(x)}{1+ g_{\EN}(x)}.
\end{equation*}
Putting this expression into \eqref{eq:f_split} and simplifying gives the claimed identity.
\end{proof}

\begin{lemma} \label{lem:bounceStats}
Let $B_{\ell,r}(x)$ be the generating function for the number of paths in $\L_{\beta/\alpha}(k)$ having exactly $\ell$ left bounces and $r$ right bounces on the line $y=\frac{\beta}{\alpha} x$. For $\ell,r>0$, we have
\begin{align*}
  B_{\ell,r}(x) = 
  & \sum_{i=1}^{\ell-1}\binom{\ell-1}{i}\binom{r-1}{i-1} 
  f_{\s\E}(x) f_{\N\s}(x) \big(f_{\EE}(x)f_{\NN}(x)\big)^i f_{\EN}(x)^{\ell+r-2i-1} \\
  &+ \sum_{i=1}^{\ell}\binom{\ell-1}{i-1}\binom{r-1}{i-1} 
  f_{\s\E}(x)^2f_{\EE}(x)^{i-1}f_{\NN}(x)^{i}f_{\EN}(x)^{\ell+r-2i} \\
  &+ \sum_{i=1}^{\ell}\binom{\ell-1}{i-1}\binom{r-1}{i-1} 
  f_{\s\N}(x)^2f_{\EE}(x)^{i}f_{\NN}(x)^{i-1}f_{\EN}(x)^{\ell+r-2i} \\
  &+ \sum_{i=2}^{\ell+1}\binom{\ell-1}{i-2}\binom{r-1}{i-1}
  f_{\s\N}(x) f_{\E\s}(x) \big(f_{\EE}(x)f_{\NN}(x)\big)^{i-1} f_{\EN}(x)^{\ell+r-2i+1}.
\end{align*}
\end{lemma}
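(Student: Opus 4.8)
The plan is to decompose a path according to its bounce points, encode the left/right pattern of the bounces as a binary word, and then sum over all admissible words. Fix $\ell,r>0$ and set $n=\ell+r$. Given a path $P\in\L_{\beta/\alpha}(k)$ with exactly $\ell$ left and $r$ right bounces, let the bounce points be $(\alpha c_1,\beta c_1),\dots,(\alpha c_n,\beta c_n)$ with $0<c_1<\dots<c_n<k$. Cutting $P$ at these points produces bounce-free segments $P_0,P_1,\dots,P_n$, where $P_0$ runs from $(0,0)$ to the first bounce point, $P_n$ from the last bounce point to $(\alpha k,\beta k)$, and $P_j$ (for $1\le j\le n-1$) between two consecutive bounce points; each segment is a nonempty bounce-free path, so each is counted by the appropriate $f_{\mathsf{ab}}(x)$ with $x$ marking its size, and sizes add up to $k$. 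Record the bounce pattern as a word $w=w_1\cdots w_n\in\{\mathsf{L},\mathsf{R}\}^n$ with $\ell$ letters $\mathsf{L}$ and $r$ letters $\mathsf{R}$, where $w_j=\mathsf{L}$ for a left bounce (an $\E\N$ corner, incoming step $\E$, outgoing step $\N$) and $w_j=\mathsf{R}$ for a right bounce (an $\N\E$ corner, incoming step $\N$, outgoing step $\E$).

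The key point is that $w$ dictates the type of every segment. Reading off the incoming/outgoing steps at each bounce point: $P_0$ contributes $f_{\s\E}(x)$ if $w_1=\mathsf{L}$ and $f_{\s\N}(x)$ if $w_1=\mathsf{R}$; $P_n$ contributes $f_{\N\s}(x)$ if $w_n=\mathsf{L}$ and $f_{\E\s}(x)$ if $w_n=\mathsf{R}$; and for $1\le j\le n-1$ the segment $P_j$ contributes $f_{\EN}(x)$ if $w_jw_{j+1}\in\{\mathsf{LL},\mathsf{RR}\}$ (here using $f_{\NE}=f_{\EN}$), $f_{\NN}(x)$ if $w_jw_{j+1}=\mathsf{LR}$, and $f_{\EE}(x)$ if $w_jw_{j+1}=\mathsf{RL}$. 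Conversely every choice of a word $w$ together with bounce-free segments of the prescribed types reassembles uniquely into a path with exactly $\ell$ left and $r$ right bounces, so $B_{\ell,r}(x)$ equals the sum over all such words $w$ of the product of the associated segment generating functions.

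It remains to evaluate that sum over words, and I would organize it by the pair $(w_1,w_n)\in\{\mathsf{LL},\mathsf{LR},\mathsf{RL},\mathsf{RR}\}$, which produces exactly the four displayed summands, namely the $f_{\s\E}f_{\N\s}$ term, the $f_{\s\E}^2$ term (since $f_{\E\s}=f_{\s\E}$), the $f_{\s\N}^2$ term, and the $f_{\s\N}f_{\E\s}$ term, respectively. Within each case, group words by their block decomposition $\mathsf{L}^{a_1}\mathsf{R}^{b_1}\cdots$ into maximal runs: a word with $p$ blocks of $\mathsf{L}$'s and $q$ blocks of $\mathsf{R}$'s has exactly $\binom{\ell-1}{p-1}\binom{r-1}{q-1}$ realizations (a composition of $\ell$ into $p$ positive parts and one of $r$ into $q$ positive parts), it has one $\mathsf{LR}$-boundary and one $\mathsf{RL}$-boundary contribution of $f_{\NN}$ and $f_{\EE}$ per corresponding block boundary, and $f_{\EN}$ for each of the remaining $n-1$ adjacent pairs. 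In the $\mathsf{LL}$-case one has $p=q+1$ with equally many $\mathsf{LR}$- and $\mathsf{RL}$-boundaries; writing $i=q$ gives $(f_{\EE}f_{\NN})^i f_{\EN}^{\ell+r-2i-1}$ and the first sum. In the $\mathsf{LR}$-case $p=q=:i$ with $i$ $\mathsf{LR}$- and $i-1$ $\mathsf{RL}$-boundaries, giving $f_{\EE}^{i-1}f_{\NN}^{i}f_{\EN}^{\ell+r-2i}$ and the second sum; the $\mathsf{RL}$-case is the mirror image and gives the third. In the $\mathsf{RR}$-case $q=p+1$; writing $i=q$ gives $(f_{\EE}f_{\NN})^{i-1}f_{\EN}^{\ell+r-2i+1}$ and the fourth sum, with the range $i=2,\dots,\ell+1$ coming from $1\le p=i-1\le\ell$.

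The routine parts are translating each bounce type into the endpoint steps of the adjacent segments and the elementary count of compositions into positive parts. The step that needs care — and the main obstacle — is the bookkeeping of $\mathsf{LR}$- and $\mathsf{RL}$-block boundaries in each of the four cases, so that the exponents of $f_{\NN}$, $f_{\EE}$, $f_{\EN}$ and the summation ranges of $i$ come out exactly as stated; the degenerate situations ($\ell=1$ or $r=1$, empty first or fourth sum, etc.) require no separate treatment since the relevant binomial coefficients then vanish.
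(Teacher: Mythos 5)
Your proposal is correct and is essentially the paper's own argument in different clothing: cutting at the bounce points into bounce-free segments whose types are read off from the adjacent bounce types, splitting into the same four cases according to whether the first and last bounces are left or right, and counting compositions of $\ell$ and $r$ with the binomials $\binom{\ell-1}{\cdot}\binom{r-1}{i-1}$. Your encoding by $\{\mathsf{L},\mathsf{R}\}$-words and maximal runs is just a repackaging of the paper's grouping of right bounces into runs separated by left bounces, and all exponents and summation ranges check out.
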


\begin{proof}
We adopt the standard convention $\binom{m}{n}=0$ when $n>m\ge 0$. We are interested in counting all paths $P$ having exactly $r$ right bounces and $\ell$ left bounces. The vertices of such bounces correspond to lattice points on the line $y=\frac{\beta}{\alpha}x$. We group the right bounces into segments with no left bounces, and such that there must be at least one left bounce between two groups. If there are $i$ such segments, this grouping induces a composition of $r$ into $i$ parts $r = r_1+\dots+r_i$, where $r_j$ is the number of right bounces in the $j$th group. There are $\binom{r-1}{i-1}$ such compositions.  

Assume that $P$ is a path with $i$ such groups of right bounces. The $\ell$ left bounces must then be distributed between the origin and the first right bounce, the gaps between the $i$ segments defined above, and between the last right bounce and the end of the path. There are four disjoint cases.

\smallskip\noindent
{\sc Case 1:} The first and last bounces are left bounces. In this case there are $i+1$ segments to place the left bounces, which can be done in $\binom{\ell-1}{i}$ different ways (compositions of $\ell$ into $i+1$ parts). Assume the bounces are distributed according to the compositions $r=r_1+\dots+r_i$ and $\ell=\ell_1+\dots+\ell_{i+1}$. Thus between the origin and the last right bounce of the first group, the path is of the form
\[ u_{1,0}\, \underbrace{\E|\N\, u_{1,1}\, \E|\N \cdots \E|\N\, u_{1,\ell_1-1}\, \E|\N}_{\ell_1 \text{ left bounces}}\, 
  w_1 \underbrace{\N|\E\,v_{1,1}\, \N|\E\cdots \N|\E\, v_{1,r_1-1}\, \N|{\color{red}\E}}_{r_1 \text{ right bounces}}\,, \]
where the symbol $|$ indicates a bounce and the $u_{i,j}, v_{i,j}, w_j$ represent bounce-free path segments. Since there are $\ell_1$ left bounces and $r_1$ right bounces in this group, the enumeration of such path segments can be achieved by the generating function $f_{\s\E}(x) f_{\NE}(x)^{\ell_1-1} f_{\NN}(x) f_{\EN}(x)^{r_1-1}$.

The first path segment is then followed by $i-1$ segments of the form
\begin{equation}\label{eq:midSegmentPartition}
 {\color{red}\E}\, u_{j,0}\, 
  \underbrace{\E|\N\, u_{j,1}\, \E|\N \cdots \E|\N\, u_{j,\ell_j-1}\, \E|\N}_{\ell_j \text{ left bounces}}\, w_j 
  \underbrace{\N|\E\,v_{j,1}\, \N|\E\cdots \N|\E\, v_{j,r_j-1}\, \N|{\color{red}\E}}_{r_j \text{ right bounces}}\,,  
  \quad 2\le j \le i, 
\end{equation}
where the first $\E$-step is the last step of the previous group. The number of all possible such segments can be counted using the generating function 
\begin{equation}\label{eq:midSegmentFunction}
 f_{\EE}(x) f_{\NE}(x)^{\ell_j-1} f_{\NN}(x) f_{\EN}(x)^{r_j-1}, \quad 2\le j \le i.
\end{equation}
Finally, the last group (containing only left bounces) must be of the form 
\[ {\color{red}\E}\, u_{i+1,0}\, \underbrace{\E|\N\, u_{i+1,1}\, \E|\N \cdots 
  \E|\N\, u_{i+1,\ell_{j+1}-1}\, \E|\N}_{\ell_{i+1} \text{ left bounces}}\, w_{i+1}, \]
and the corresponding generating function takes the form $f_{\EE}(x) f_{\NE}(x)^{\ell_{i+1}-1} f_{\N\s}(x)$. Multiplying all the pieces together we arrive at the generating function
\begin{equation*}
  \sum_{i=1}^{\ell-1}\binom{\ell-1}{i}\binom{r-1}{i-1} 
  f_{\s\E}(x) f_{\N\s}(x) \big(f_{\EE}(x)f_{\NN}(x)\big)^i f_{\EN}(x)^{\ell+r-2i-1}.
\end{equation*}

\smallskip\noindent
{\sc Case 2:} The first bounce of $P$ is a left bounce and its last bounce is a right bounce. In this case, there are $i$ segments to place the left bounces, which can be done in $\binom{\ell-1}{i-1}$ different ways. The partition of the path into groups is similar as in {\sc Case 1}, except that the last segment has now no left bounces. Thus the function $f_{\EE}(x) f_{\NE}(x)^{\ell_{i+1}-1} f_{\N\s}(x)$ from before should be replaced by $f_{\E\s}(x)$. Multiplying the pieces together we arrive at
\begin{equation*}
  \sum_{i=1}^{\ell}\binom{\ell-1}{i-1}\binom{r-1}{i-1} 
  f_{\s\E}(x)^2 f_{\EE}(x)^{i-1} f_{\NN}(x)^i f_{\EN}(x)^{\ell+r-2i}.
\end{equation*}

\smallskip\noindent
{\sc Case 3:} The first bounce of $P$ is a right bounce and its last bounce is a left bounce. Again, this may happen in $\binom{\ell-1}{i-1}$ different ways. With a similar argument as for the previous step, we get
\begin{equation*}
  \sum_{i=1}^{\ell}\binom{\ell-1}{i-1}\binom{r-1}{i-1} 
  f_{\s\N}(x)^2 f_{\EE}(x)^{i} f_{\NN}(x)^{i-1} f_{\EN}(x)^{\ell+r-2i}.
\end{equation*}

\smallskip\noindent
{\sc Case 4:} The first and last bounces of $P$ are right bounces. Thus there are only $i-1$ segments to place the left bounces, which can be done in $\binom{\ell-1}{i-2}$ different ways. These $i-1$ path segments containing left bounces are of the form \eqref{eq:midSegmentPartition} and can therefore be counted by \eqref{eq:midSegmentFunction}. The first segment must be of the form $w_1\, \N|\E\,v_{1,1}\, \N|\E\cdots \N|\E\, v_{1,r_1-1}\, \N|\E$ with $r_1$ right bounces, and $P$ must end with a bounce-free segment that starts with an $\E$-step. These two segments are counted by $f_{\s\N}(x) f_{\EN}(x)^{r_1-1} f_{\E\s}(x)$. Combining the information for the various pieces of $P$, we get
\begin{equation*}
  \sum_{i=2}^{\ell+1}\binom{\ell-1}{i-2}\binom{r-1}{i-1}
  f_{\s\N}(x) f_{\E\s}(x) \big(f_{\EE}(x)f_{\NN}(x)\big)^{i-1} f_{\EN}(x)^{\ell+r-2i+1}.
\end{equation*}
\smallskip
Finally, $B_{\ell,r}(x)$ is just the sum of the functions obtained above for the four disjoint cases.
\end{proof}

\begin{theorem} \label{thm:mainGF}
Let $B_{\ell,r}(x)$ be the generating function for the number of paths in $\L_{\beta/\alpha}(k)$ having exactly $\ell$ left bounces and $r$ right bounces on the line $y=\frac{\beta}{\alpha} x$. Then for the multivariate generating function $G(x,s,t) = \sum_{\ell,r\ge 0} B_{\ell,r}(x) s^\ell t^r$, we have
\begin{equation*}
  G(x,s,t) = \frac{g(x) + (2-s-t)\big(g_{\EN}(x)^2 - g_{\EE}(x)g_{\NN}(x)\big)}%
  {1 + (2-s-t)g_{\EN}(x) + (1-s)(1-t)(g_{\EN}(x)^2-g_{\EE}(x)g_{\NN}(x))}.
\end{equation*}
Moreover, if $G_{\mathsf{ab}}(x,s,t)$ denotes the restriction of $G(x,s,t)$ to $\mathsf{AB}$-paths, then
\begin{align*}
  G_{\EE}(x,s,t) &= \frac{g_{\EE}(x)}{1 + (2-s-t)g_{\EN}(x) + (1-s)(1-t)(g_{\EN}(x)^2-g_{\EE}(x)g_{\NN}(x))}, \\
  G_{\NN}(x,s,t) &= \frac{g_{\NN}(x)}{1 + (2-s-t)g_{\EN}(x) + (1-s)(1-t)(g_{\EN}(x)^2-g_{\EE}(x)g_{\NN}(x))}, \\
  G_{\EN}(x,s,t) &= \frac{g_{\EN}(x) + (1-s)\big(g_{\EN}(x)^2 - g_{\EE}(x)g_{\NN}(x)\big)}%
  {1 + (2-s-t)g_{\EN}(x) + (1-s)(1-t)(g_{\EN}(x)^2-g_{\EE}(x)g_{\NN}(x))}, \\
  G_{\NE}(x,s,t) &= \frac{g_{\EN}(x) + (1-t)\big(g_{\EN}(x)^2 - g_{\EE}(x)g_{\NN}(x)\big)}%
  {1 + (2-s-t)g_{\EN}(x) + (1-s)(1-t)(g_{\EN}(x)^2-g_{\EE}(x)g_{\NN}(x))}.
\end{align*}
\end{theorem}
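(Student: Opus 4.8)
The plan is to organize all paths in $\L_{\beta/\alpha}(k)$ according to the sequence of their bounces along $y=\tfrac{\beta}{\alpha}x$, and to build $G(x,s,t)$ as a ``transfer-matrix''–style sum over such bounce sequences, marking each left bounce by $s$ and each right bounce by $t$. First I would set up the elementary building blocks: a path decomposes at its bounce vertices into bounce-free $\mathsf{AB}$-segments, so the relevant generating functions are the four $f_{\mathsf{ab}}(x)$ together with the marked step-functions $f_{\s\E},f_{\E\s},f_{\s\N},f_{\N\s}$ from Section 3. The key observation is that, reading a path from left to right, between two consecutive bounces of the \emph{same} type (both left or both right) one inserts a segment counted by $f_{\EN}$ (for two lefts) or $f_{\NE}$ (for two rights), while a switch from a left bounce to a right bounce inserts an $f_{\EE}$-segment and a switch from right to left inserts an $f_{\NN}$-segment; the initial and final segments are governed by the ``$\s$'' functions depending on the type of the first and last bounce. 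This is exactly the bookkeeping already carried out in Lemma~\ref{lem:bounceStats}, so for $\ell,r>0$ the coefficient $B_{\ell,r}(x)$ is known, and one must also fold in the boundary pieces $B_{\ell,0}$, $B_{0,r}$ from \eqref{eq:onesidedBounces} and $B_{0,0}=f(x)$.

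Next I would sum the series $\sum_{\ell,r\ge 0}B_{\ell,r}(x)s^\ell t^r$. Rather than substituting the four-fold sum of Lemma~\ref{lem:bounceStats} directly, it is cleaner to resum by grouping paths according to how their bounce word begins and ends. Introduce the abbreviation $d(x)=g_{\EN}(x)^2-g_{\EE}(x)g_{\NN}(x)$ and recall from Theorem~\ref{thm:bounce_free} and Lemma~\ref{lem:no_leftBounces} the closed forms for $f_{\EE},f_{\NN},f_{\EN}$ and for $\sum_\ell B_{\ell,0}$. Writing $G_{\EE}(x,s,t)$ for the restriction to $\E\E$-paths, such a path either has no bounce (contributing $f_{\EE}$) or has a leftmost bounce; peeling off the first bounce-free segment and the first bounce and iterating yields a linear recursion whose solution is a rational function in $f_{\EE},f_{\NN},f_{\EN},s,t$. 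Concretely, if $H$ denotes the generating function for the ``middle'' part of a path between its first and last bounce (summed with $s,t$ weights), then a short computation gives $H = \dfrac{1}{1-(s+t)f_{\EN}-(1-s)(1-t)(f_{\EN}^2-f_{\EE}f_{\NN})}$ after telescoping the two alternating geometric-type sums; this is the single identity that does the real work. From $H$ one assembles $G_{\EE}=f_{\EE}H$, and similarly $G_{\NN}=f_{\NN}H$, while $G_{\EN}$ and $G_{\NE}$ acquire an extra term from the no-bounce case: $G_{\EN}=\big(f_{\EN}+(1-s)(f_{\EN}^2-f_{\EE}f_{\NN})\big)H$ and symmetrically for $G_{\NE}$ with $s\leftrightarrow t$.

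Finally I would convert everything from the $f$-variables to the $g$-variables using Theorem~\ref{thm:bounce_free}. Substituting
\[
 f_{\EE}=\frac{g_{\EE}}{(1+g_{\EN})^2-d},\qquad
 f_{\NN}=\frac{g_{\NN}}{(1+g_{\EN})^2-d},\qquad
 f_{\EN}=1-\frac{1+g_{\EN}}{(1+g_{\EN})^2-d},
\]
one finds $f_{\EN}^2-f_{\EE}f_{\NN}=\dfrac{1-(1+g_{\EN})\cdot\frac{1+g_{\EN}}{(1+g_{\EN})^2-d}+\,\text{(lower)}}{\cdots}$, which simplifies to a clean ratio; clearing the common denominator $(1+g_{\EN})^2-d$ turns the denominator of $H$ into exactly $1+(2-s-t)g_{\EN}+(1-s)(1-t)d$, after which $G_{\EE},G_{\NN},G_{\EN},G_{\NE}$ take the stated forms, and summing the four (using $g=g_{\EE}+g_{\NN}+2g_{\EN}$) gives $G(x,s,t)$. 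As a consistency check one should verify $G(x,1,1)=g(x)$, $G(x,0,0)=f(x)$ (Corollary~\ref{cor:bounce_free}), and that $G_{\EN}+G_{\NE}$ at $s=t$ matches $\sum_r B_{0,r}+\sum_\ell B_{\ell,0}$ minus overlaps via Lemma~\ref{lem:no_leftBounces}. The main obstacle is the telescoping identity for $H$: the four sums in Lemma~\ref{lem:bounceStats} have shifted binomial coefficients $\binom{\ell-1}{i},\binom{\ell-1}{i-1},\binom{\ell-1}{i-2}$ against $\binom{r-1}{i-1}$, and recognizing that $\sum_{\ell,r}\binom{\ell-1}{i+a}\binom{r-1}{i-1}s^\ell t^r$ collapses to $s^{\,?}t^{\,?}(st)^i/((1-s)(1-t))^{?}\cdot(\text{something})$ and that the resulting geometric series in $i$ converges to the displayed denominator requires care with the index shifts and with the boundary terms $\ell$ or $r=0$; everything after that is routine algebra.
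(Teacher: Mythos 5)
Your overall strategy---cut each path at its bounce vertices into bounce-free segments, sum over bounce words with $s$ marking left and $t$ marking right bounces, and then convert from the $f$-variables to the $g$-variables via Theorem~\ref{thm:bounce_free}---is the same as the paper's; the paper just evaluates the four-fold sum of Lemma~\ref{lem:bounceStats} directly with the binomial identity $\sum_{\ell\ge j+1}\sum_{r\ge i}\binom{\ell-1}{j}\binom{r-1}{i-1}y^\ell z^r=\big(\tfrac{y}{1-y}\big)^{j+1}\big(\tfrac{z}{1-z}\big)^{i}$ instead of setting up a transfer recursion. The problem is that the identity you yourself flag as ``the one that does the real work'' is wrong. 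Writing $d_f=f_{\EN}^2-f_{\EE}f_{\NN}$, the correct kernel is
\[
 H=\frac{1}{(1-sf_{\EN})(1-tf_{\EN})-st\,f_{\EE}f_{\NN}}=\frac{1}{1-(s+t)f_{\EN}+st\,d_f},
\]
whereas you wrote $H=1/\big(1-(s+t)f_{\EN}-(1-s)(1-t)d_f\big)$; the denominators differ by $(1-s-t)d_f\neq 0$. A sanity check at $s=t=1$ exposes this: one must have $G_{\EE}(x,1,1)=g_{\EE}(x)$, hence $H|_{s=t=1}=g_{\EE}/f_{\EE}=(1+g_{\EN})^2-g_{\EE}g_{\NN}$. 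The correct $H$ gives $1/(1-2f_{\EN}+d_f)$, which equals this, while yours gives $1/(1-2f_{\EN})=\big((1+g_{\EN})^2-g_{\EE}g_{\NN}\big)/\big(1-(g_{\EN}^2-g_{\EE}g_{\NN})\big)$, which does not. Your assembly of $G_{\EN}$ is also off: it should be $G_{\EN}=\big(f_{\EN}-s\,d_f\big)H$, not $\big(f_{\EN}+(1-s)d_f\big)H$; at $s=t=1$ your version returns $g_{\EN}+(g_{\EN}^2-g_{\EE}g_{\NN})$ instead of $g_{\EN}$, and the sum of your four restricted functions is $\big(f+(2-s-t)d_f\big)H$ rather than the correct $\big(f-(s+t)d_f\big)H$, so it cannot reproduce the stated $G(x,s,t)$.

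A smaller slip that likely feeds these sign errors: since a left bounce is an $\E|\N$ vertex, the segment from a left bounce to the next right bounce starts and ends with $\N$ and is counted by $f_{\NN}$, while the segment from a right bounce to the next left bounce is an $\E\E$-segment counted by $f_{\EE}$---the opposite of your transfer rules. None of this is unfixable: the decomposition is sound, the boundary terms $B_{0,0}=f$, \eqref{eq:onesidedBounces}, and the $f\to g$ substitution you outline are exactly the paper's. But the central resummation identity as stated is false, so the proof does not go through; you need to redo the geometric-series computation (or follow the paper and sum Lemma~\ref{lem:bounceStats} term by term) to land on the denominator $(1-sf_{\EN})(1-tf_{\EN})-stf_{\EE}f_{\NN}$ before converting to the $g$-variables.
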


\begin{proof}
First, split $G(x,s,t) = f(x) + \sum_{\ell=1}^\infty B_{\ell,0}(x) s^\ell + \sum_{r=1}^\infty B_{0,r}(x) t^r +
  \sum_{\ell,r\ge 1} B_{\ell,r}(x) s^\ell t^r$. Using the identities from equation \eqref{eq:onesidedBounces}, we get 
\begin{equation*}
 \sum_{\ell=1}^\infty B_{\ell,0}(x) s^\ell + \sum_{r=1}^\infty B_{0,r}(x) t^r  =
 \frac{sf_{\s\E}(x)f_{\N\s}(x)}{1- sf_{\EN}(x)} + \frac{tf_{\s\E}(x)f_{\N\s}(x)}{1- tf_{\EN}(x)}.
\end{equation*}
Furthermore, by the previous lemma and the basic identity
\begin{equation*}
  \sum_{\ell=j+1}^\infty \sum_{r=i}^\infty \binom{\ell-1}{j}\binom{r-1}{i-1} y^\ell z^r 
  = \Big(\frac{y}{1-y}\Big)^{j+1}\Big(\frac{z}{1-z}\Big)^{i},
\end{equation*}
the term $\sum_{\ell,r\ge 1} B_{\ell,r}(x) s^\ell t^r$ can be written as the sum of the following four functions: 
\begin{align*}
  & f_{\s\E}(x) f_{\N\s}(x) \frac{s}{1-sf_{\EN}(x)} \sum_{i=1}^\infty
  \left[ \frac{st f_{\EE}(x)f_{\NN}(x)}{(1-sf_{\EN}(x))(1-tf_{\EN}(x))} \right]^i, \\
  & f_{\s\E}(x)^2 \frac{1}{f_{\EE}(x)} \sum_{i=1}^\infty
  \left[ \frac{st f_{\EE}(x)f_{\NN}(x)}{(1-sf_{\EN}(x))(1-tf_{\EN}(x))} \right]^i, \\
  & f_{\s\N}(x)^2 \frac{1}{f_{\NN}(x)} \sum_{i=1}^\infty
  \left[ \frac{st f_{\EE}(x)f_{\NN}(x)}{(1-sf_{\EN}(x))(1-tf_{\EN}(x))} \right]^i, \\
  & f_{\s\E}(x) f_{\N\s}(x) \frac{t}{1-tf_{\EN}(x)} \sum_{i=2}^\infty
  \left[ \frac{st f_{\EE}(x)f_{\NN}(x)}{(1-sf_{\EN}(x))(1-tf_{\EN}(x))} \right]^{i-1}.
\end{align*}
Adding these functions and simplifying, we arrive at

\begin{equation*}
 G(x,s,t) = \frac{f - (s+t)\big(f_{\EN}^2-f_{\EE}f_{\NN}\big)}{(1-sf_{\EN})(1-tf_{\EN}) - st f_{\EE}f_{\NN}},
\end{equation*}
which by Theorem~\ref{thm:bounce_free} gives
\begin{equation*}
 G(x,s,t) = \frac{g + (2-s-t)\big(g_{\EN}^2 - g_{\EE}g_{\NN}\big)}%
 {1 + (2-s-t)g_{\EN} + (1-s)(1-t)\big(g_{\EN}^2-g_{\EE}g_{\NN}\big)}.
\end{equation*}
Finally, the claimed equations for the restricted functions $G_{\mathsf{ab}}(x,s,t)$ can be obtained in a similar way after making the appropriate adjustments to Lemma~\ref{lem:bounceStats}.
\end{proof}

Note that when $s=t=0$ we recover the expression for $f(x)$ (bounce-free case) from Corollary~\ref{cor:bounce_free}. Also, with $s=0$ and $t=1$ (or equivalently $s=1$ and $t=0$) we recover Lemma~\ref{lem:no_leftBounces}.

\section{The case when $\beta=1$}
\label{sec:beta=1}

For the special case when $\beta=1$, we have
\begin{equation*}
   g(x)=\sum_{k=1}^{\infty}\binom{(\alpha+1)k}{k}x^k,
\end{equation*}
which is related to the generating function for the Fuss-Catalan numbers $\frac{1}{\alpha k + 1} \binom{(\alpha+1)k}{k}$. Also,
\begin{equation*}
g_{\EE}(x)=\sum_{k=1}^{\infty}\tbinom{(\alpha+1)k-2}{k}x^k, \;\;
g_{\EN}(x)=\sum_{k=1}^{\infty}\tbinom{(\alpha+1)k-2}{k-1}x^k, \;\;
g_{\NN}(x)=\sum_{k=1}^{\infty}\tbinom{(\alpha+1)k-2}{k-2}x^k,
\end{equation*}
and equation \eqref{eq:gRelations} simplifies to
\begin{equation*}
  g_{\EE}(x) = \alpha g_{\NN}(x) + (\alpha -1)g_{\EN}(x).
\end{equation*}
Moreover, one can prove (combinatorially) the related identity
\begin{equation*}
  f_{\EE}(x) = f_{\NN}(x)+(\alpha-1)f_{\EN}(x),
\end{equation*}
and we get $g_{\EN}(x)^2 - g_{\EE}(x)g_{\NN}(x) = g_{\NN}(x)$. This allows us to simplify some of the identities from the previous sections. In fact, Theorem~\ref{thm:bounce_free} becomes
\begin{equation*}
 f_{\EE}(x) =\frac{g_{\EE}(x)}{1 + g(x) - g_{\EE}(x)}, \;\;
 f_{\EN}(x) =\frac{g_{\NN}(x) + g_{\EN}(x)}{1 + g(x) - g_{\EE}(x)}, \;\;
 f_{\NN}(x) =\frac{g_{\NN}(x)}{1 + g(x) - g_{\EE}(x)},
\end{equation*}
and the function $G(x,s,t)$ takes the form
\begin{equation*}
G(x,s,t) = \frac{g(x) + (2-s-t)g_{\NN}(x)}{1 + (2-s-t)g_{\EN}(x) + (1-s)(1-t)g_{\NN}(x)}.
\end{equation*}

Moreover, in terms of the function 
\[ c_{\alpha}(x) = \sum_{k=0}^\infty \frac{1}{\alpha k + 1} \binom{(\alpha+1)k}{k} x^k, \] 
the generating functions for the bounce-free $\mathsf{AB}$-paths can be written as
\begin{align*}
 f_{\EE}(x) &=\frac{(\alpha c_{\alpha}(x)-1)(c_{\alpha}(x)-1)}{(1-\alpha)c_{\alpha}(x)^2 + (\alpha+1)c_{\alpha}(x)-1}, 
 \\[1ex]
 f_{\NN}(x) &=\frac{(c_{\alpha}(x)-1)^2}{(1-\alpha)c_{\alpha}(x)^2 + (\alpha+1)c_{\alpha}(x)-1}, \\[1ex]
 f_{\EN}(x) &=\frac{c_{\alpha}(x)(c_{\alpha}(x)-1)}{(1-\alpha)c_{\alpha}(x)^2 + (\alpha+1)c_{\alpha}(x)-1}. 
\end{align*}

For $\alpha>1$ there seems to be no results in the literature that address the bounce statistics encoded by $G(x,s,t)$. Only for $\alpha=2$, the OEIS \cite[A000259, A000305]{OEIS} gives matches for the bounce-free functions
\begin{align*}
 f_{\EE}(x) &= x + 4x^2 + 18x^3 + 89x^4 + 466x^5 + 2537x^6 + 14209x^7 + 81316x^8 + \cdots, \\
 f_{\EN}(x) &= x + 3x^2 + 13x^3 + 63x^4 + 326x^5 + 1761x^6 + 9808x^7 + 55895x^8 +\cdots,
\end{align*}
which are connected to the enumeration of certain rooted planar maps, see \cite{Br63,Tu63}. We leave it as an open problem to find corresponding bijections.

\bigskip
If $\alpha=\beta=1$, we can use properties of the Catalan generating function $c(x) = c_1(x)$ to further simplify the functions above. In this case, we have
\begin{equation*}
 f_{\EE}(x) = f_{\NN}(x) = \frac{xc(x)^2 - xc(x)}{1+xc(x)}, 
 \;\; f_{\EN}(x) = \frac{xc(x)^2}{1+xc(x)}, \;\text{ and }\; f(x) = 2(c(x)-1).
\end{equation*}
Moreover,
\begin{equation*}
 G(x,s,t) = \frac{2xc(x) + (2-s-t)(xc(x) - x)}{1 - 2xc(x) + (2-s-t)x + (1-s)(1-t)(xc(x) - x)}.
\end{equation*}

For this special case ($\alpha=\beta=1$), our results coincide with the bounce statistics obtained by Pan and Remmel in \cite{PanRe16}. We refer to their work for other related statistics and information regarding some combinatorial applications. Here we would like to mention the interesting case of lattice paths having exactly $b$ bounces (left or right) for which the generating function becomes:\footnote{Using Lemma~\ref{lem:bounceStats} together with the additional simplification $f_{\s\E}(x) f_{\N\s}(x) = c(x) -1$.}
\begin{equation}\label{eq:bbouncesGF}
 G_b(x) = 2(c(x) - 1)^{b+1} = 2x^{b+1} c(x)^{2b+2} 
  = 2\sum_{k=1}^\infty \frac{b+1}{k+b}\binom{2k+2b}{k-1} x^{k+b}. 
\end{equation}
For $b=1,\dots,6$, these sequences are listed in the OEIS (A002057, A003517, A003518, A003519, A090749, A268446) with corresponding combinatorial interpretations, including the enumeration of lattice paths, permutation patterns, standard Young tableaux, and other interesting families.

An interesting application of \eqref{eq:bbouncesGF}, not discussed in \cite{PanRe16}, is the following connection between lattice paths with a prescribed number of bounces and standard Young tableaux.

\begin{corollary} \label{cor:SYT}
For $b,n\in\mathbb{N}$ and $n>b$, the set of lattice paths from $(0,0)$ to $(n,n)$ that start with an $\E$-step and have exactly $b$ bounces (left or right) is in one-to-one correspondence with the set of standard Young tableaux of shape $(n+b,n-b-1)$.
\end{corollary}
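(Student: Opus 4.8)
The plan is to prove that the two sets are equinumerous by computing both cardinalities explicitly, using \eqref{eq:bbouncesGF} as the main input (the corollary is advertised as an application of that identity, so the proof should be enumerative). First I would read off from the series in \eqref{eq:bbouncesGF} the number of \emph{all} lattice paths from $(0,0)$ to $(n,n)$ having exactly $b$ bounces: substituting $k+b=n$ in that sum — which requires $k=n-b\ge 1$, i.e.\ exactly the hypothesis $n>b$ — shows this count equals $\tfrac{2(b+1)}{n}\binom{2n}{n-b-1}$. Next, reflection across the diagonal $y=x$ interchanges $\E$- and $\N$-steps and interchanges left with right bounces; it is therefore a fixed-point-free involution on the set of paths from $(0,0)$ to $(n,n)$ with exactly $b$ bounces, and it interchanges those that begin with an $\E$-step and those that begin with an $\N$-step. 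Hence exactly half of them begin with $\E$, so the set on the left in the corollary has $\tfrac{b+1}{n}\binom{2n}{n-b-1}$ elements.

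For the other side I would invoke the standard fact that the number of standard Young tableaux of a two-row shape $(\lambda_1,\lambda_2)$ with $\lambda_1\ge\lambda_2$ equals $\binom{\lambda_1+\lambda_2}{\lambda_2}-\binom{\lambda_1+\lambda_2}{\lambda_2-1}$ (a one-line reflection-principle, i.e.\ ballot, computation, or the hook-length formula). With $(\lambda_1,\lambda_2)=(n+b,\,n-b-1)$ — a legitimate shape precisely when $n>b$, and having $2n-1$ cells — this becomes $\binom{2n-1}{n-b-1}-\binom{2n-1}{n-b-2}$. It then remains to verify the elementary identity
\[
  \frac{b+1}{n}\binom{2n}{n-b-1}=\binom{2n-1}{n-b-1}-\binom{2n-1}{n-b-2},
\]
which drops out after rewriting $\binom{2n}{n-b-1}=\tfrac{2n}{n+b+1}\binom{2n-1}{n-b-1}$ and $\binom{2n-1}{n-b-2}=\tfrac{n-b-1}{n+b+1}\binom{2n-1}{n-b-1}$: both sides equal $\tfrac{2(b+1)}{n+b+1}\binom{2n-1}{n-b-1}$. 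This establishes the claimed one-to-one correspondence.

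I do not expect a real obstacle along this route — every step is routine — and the only place to be careful is the bookkeeping that matches the exponent $k+b=n$ in \eqref{eq:bbouncesGF} with the shape $(n+b,n-b-1)$ and its $2n-1$ cells (and the factor $2$ that the symmetry argument removes). The genuinely interesting and harder question, which I would not pursue here, is whether there is an \emph{explicit} bijection; a natural attempt would start from the canonical decomposition of a $b$-bounce path into its $b+1$ bounce-free diagonal segments (the combinatorics behind $2(c(x)-1)^{b+1}$) and try to build a tableau of shape $(n+b,n-b-1)$ by a growth or insertion procedure in which the bounces govern how the second row is filled, but making such a map canonical and reversible is exactly the kind of problem the paper leaves open elsewhere.
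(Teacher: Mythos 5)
Your proof is correct and follows exactly the route the paper intends (the corollary is stated without proof as a direct consequence of \eqref{eq:bbouncesGF}): extract the coefficient of $x^n$ from $G_b(x)$, halve it via the diagonal reflection that exchanges $\E$/$\N$-starting paths while preserving the bounce count, and match $\tfrac{b+1}{n}\binom{2n}{n-b-1}=\tfrac{2(b+1)}{n+b+1}\binom{2n-1}{n-b-1}$ with the ballot-number count of standard Young tableaux of shape $(n+b,n-b-1)$. The bookkeeping ($k=n-b\ge 1$ matching $n>b$, and the $2n-1$ cells) all checks out.
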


\subsection*{Horizontal crosses}
In the context of this paper, a {\em cross} on a lattice path $L\in \L_{\beta/\alpha}(k)$ is a point where $L$ traverses  the line $y=\frac{\beta}{\alpha}x$. In general, enumerating the elements of $\L_{\beta/\alpha}(k)$ by the number of crosses (vertical or horizontal) is more involved due to the fact that crosses may happen on a non-lattice point. However, for $\beta=1$, horizontal crosses (if any) occur on a lattice point and can be easily handled with an inclusion-exclusion argument as we did for the enumeration of bounces.

\begin{lemma} 
Let $\textup{nhc}_{\mathsf{ab}}(x)$ denote the generating function for the number of $\mathsf{AB}$-paths in $\L_{1/\alpha}(k)$ with no horizontal crosses. Then
\begin{equation*}
 \textup{nhc}_{\EE}(x) = \frac{g_{\EE}(x)}{1+g_{\EE}(x)} \;\text{ and }\; 
 \textup{nhc}_{\EN}(x) = \textup{nhc}_{\NE}(x) = \frac{g_{\EN}(x)}{1+g_{\EE}(x)}.
\end{equation*}
Thus, lattice paths in $\L_{1/\alpha}(k)$ that start with an $\E$-step and never cross the line $y=\frac{1}{\alpha}x$ horizontally are enumerated by the generating function
\begin{equation*}
  h_{\E\s}(x) = \frac{g_{\EE}(x)+g_{\EN}(x)}{1+g_{\EE}(x)} = 
  \sum_{k=1}^\infty \frac{\alpha(\alpha+2)}{(\alpha+1)k+1}\binom{(\alpha+1)k+1}{k-1} x^k.
\end{equation*}
\end{lemma}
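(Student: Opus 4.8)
The plan is to follow the inclusion--exclusion template of the two lemmas on right bounces, once the geometry of a horizontal cross is pinned down for $\beta=1$. The line $y=\tfrac1\alpha x$ has only the lattice points $(\alpha i,i)$, and a point $(m,n)$ lies strictly above (resp.\ below) it according as $m<\alpha n$ (resp.\ $m>\alpha n$). An East step runs from $(m,n)$ to $(m+1,n)$; inspecting these inequalities, such a step can only pass from the ``above'' region to the ``below'' region, and it does so exactly when the path goes through $(\alpha n,n)$ via two consecutive East steps. So for $\beta=1$ a horizontal cross of a path in $\L_{1/\alpha}(k)$ is precisely an $\E\E$-vertex lying on the line, necessarily at some $(\alpha i,i)$ with $1\le i\le k-1$; this is the object that plays the role a right (or left) bounce played before.

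The key structural fact --- the analogue of ``$\E\N$-paths with $m$ right bounces have generating function $g_{\EN}(x)^{m+1}$'' --- is that at an $\E\E$-vertex both the incoming and the outgoing step are East, so cutting a path at a horizontal cross always produces a first piece ending with $\E$ and a second piece beginning with $\E$. Hence choosing $j$ of the horizontal crosses of an $\E\E$-path and cutting there decomposes it into $j+1$ $\E\E$-subpaths, and conversely concatenating $j+1$ arbitrary $\E\E$-paths creates $j$ horizontal crosses at the junctions; thus the generating function for $\E\E$-paths carrying $j$ distinguished horizontal crosses is $g_{\EE}(x)^{j+1}$. For an $\E\N$-path the same cutting gives $j$ $\E\E$-subpaths followed by one $\E\N$-subpath (only the final piece retains the terminal $\N$-step), with generating function $g_{\EE}(x)^{j}g_{\EN}(x)$; symmetrically for an $\N\E$-path it is $g_{\NE}(x)g_{\EE}(x)^{j}=g_{\EN}(x)g_{\EE}(x)^{j}$.

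Now, exactly as in the earlier lemmas, put $A_i=\{$paths of the prescribed $\mathsf{AB}$-type with a horizontal cross at $(\alpha i,i)\}$ for $i\ge1$, apply inclusion--exclusion to remove the crosses at $(\alpha i,i)$ for $i=1,\dots,m$ using the generating functions just computed, and let $m\to\infty$. This yields
\begin{gather*}
 \textup{nhc}_{\EE}(x)=\sum_{j\ge0}(-1)^{j}g_{\EE}(x)^{j+1}=\frac{g_{\EE}(x)}{1+g_{\EE}(x)},\\
 \textup{nhc}_{\EN}(x)=\textup{nhc}_{\NE}(x)=\sum_{j\ge0}(-1)^{j}g_{\EE}(x)^{j}g_{\EN}(x)=\frac{g_{\EN}(x)}{1+g_{\EE}(x)},
\end{gather*}
and adding the first two gives $h_{\E\s}(x)=\dfrac{g_{\EE}(x)+g_{\EN}(x)}{1+g_{\EE}(x)}$.

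For the closed form I would pass to the Fuss--Catalan series via the identity $\sum_{k\ge0}\binom{(\alpha+1)k+s}{k}x^{k}=\dfrac{c_\alpha(x)^{s+1}}{(\alpha+1)-\alpha c_\alpha(x)}$, valid for $s\ge-2$ (a standard instance provable by Lagrange inversion applied to $c_\alpha=1+xc_\alpha^{\alpha+1}$). Taking $s=-1$ and $s=-2$, and using $(\alpha c-1)(c-1)=\alpha c^{2}-(\alpha+1)c+1$ together with the identity $c_\alpha\big((\alpha+1)-\alpha c_\alpha\big)+(\alpha c_\alpha-1)(c_\alpha-1)=1$, one gets $g_{\EE}(x)+g_{\EN}(x)=\dfrac{\alpha(c_\alpha(x)-1)}{(\alpha+1)-\alpha c_\alpha(x)}$ and $1+g_{\EE}(x)=\dfrac{1}{c_\alpha(x)\big((\alpha+1)-\alpha c_\alpha(x)\big)}$, whence $h_{\E\s}(x)=\alpha c_\alpha(x)\big(c_\alpha(x)-1\big)=\alpha x\,c_\alpha(x)^{\alpha+2}$, the last step from $xc_\alpha^{\alpha+1}=c_\alpha-1$. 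Extracting coefficients with $[x^{m}]c_\alpha(x)^{r}=\dfrac{r}{(\alpha+1)m+r}\binom{(\alpha+1)m+r}{m}$ at $m=k-1$, $r=\alpha+2$ then gives the stated coefficient $\dfrac{\alpha(\alpha+2)}{(\alpha+1)k+1}\binom{(\alpha+1)k+1}{k-1}$. The lemmas on right bounces already carry almost all of the weight, so the only genuinely new input is the geometric identification in the first paragraph; the points to handle with care are the mild asymmetry of the $\E\N$/$\N\E$ decompositions and keeping the Fuss--Catalan generating-function identity in a form valid down to $s=-2$, so that $1+g_{\EE}$ collapses to a single power of $c_\alpha$ over the resolvent.
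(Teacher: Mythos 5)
Your proof is correct and follows exactly the route the paper indicates: the paper states this lemma without a written proof, saying only that horizontal crosses for $\beta=1$ ``can be easily handled with an inclusion-exclusion argument as we did for the enumeration of bounces,'' and your argument is precisely that template, with the (correct) geometric identification of a horizontal cross as an $\E\E$-vertex at a lattice point $(\alpha i,i)$ playing the role of the bounce. The Fuss--Catalan computation $h_{\E\s}(x)=\alpha x\,c_\alpha(x)^{\alpha+2}$ and the coefficient extraction also check out (and are consistent with the paper's subsequent formula $H_{\E\s}(x)=\alpha(c_\alpha(x)-1)$).
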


Using the same technique, it is rather straightforward to combine the avoidance of horizontal crosses with the avoidance of left and right bounces.

\begin{lemma} \label{lem:noRBHC}
Let $\L^{\textup{nhc}}_{1/\alpha}(k)$ be the set of paths in $\L_{1/\alpha}(k)$ with no horizontal crosses. The generating function for the number of paths in $\L^{\textup{nhc}}_{1/\alpha}(k)$ that start with an $\E$-step and have no right bounces is given by
\begin{equation*}
 H_{\E\s}(x) =\frac{h_{\E\s}(x)}{1+\textup{nhc}_{\EN}(x)}=\frac{g_{\E\s}(x)}{1+g_{\E\s}(x)}=\alpha(c_{\alpha}(x)-1),
\end{equation*}
where $c_\alpha(x)$ is the Fuss-Catalan generating function.
\end{lemma}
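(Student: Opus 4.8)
The plan is to rerun the inclusion--exclusion argument used to prove \eqref{eq:lemmaOne} and \eqref{eq:lemmaTwo}, but now inside the sub-universe of paths with no horizontal cross. The key observation is that for $\beta=1$ a horizontal cross is exactly an $\EE$-vertex sitting on the line $y=\frac1\alpha x$ and a right bounce is exactly an $\NE$-vertex on that line; consequently a path with no horizontal cross can touch an interior lattice point $(\alpha i,i)$ only through an $\EN$, $\NE$, or $\NN$ vertex, so within $\L^{\textup{nhc}}_{1/\alpha}(k)$ the functions $\textup{nhc}_{\EN}(x)$ and $h_{\E\s}(x)$ take over the structural role played in Section~2 by $g_{\EN}(x)$ and $g_{\E\s}(x)=g_{\EE}(x)+g_{\EN}(x)$. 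Concretely, I would set
\[ A_i=\{\,P\in\L^{\textup{nhc}}_{1/\alpha}(k):\ P\text{ starts with an }\E\text{-step and has a right bounce at }(\alpha i,i)\,\}, \]
cut a path of $A_{i_1}\cap\dots\cap A_{i_j}$ (with $i_1<\dots<i_j$) at its $j$ prescribed right bounces, and note that the initial piece and the $j-1$ middle pieces are $\EN$-paths with no horizontal cross (generating function $\textup{nhc}_{\EN}(x)$ each), while the terminal piece starts with an $\E$-step and has no horizontal cross (generating function $h_{\E\s}(x)$). Thus the $j$-th term of inclusion--exclusion contributes $\textup{nhc}_{\EN}(x)^{j}h_{\E\s}(x)$, and letting the number of admissible lattice points tend to infinity (exactly as in the earlier proofs) the alternating sum telescopes to
\[ H_{\E\s}(x)=\sum_{j\ge0}(-1)^j\textup{nhc}_{\EN}(x)^{j}h_{\E\s}(x)=\frac{h_{\E\s}(x)}{1+\textup{nhc}_{\EN}(x)}. \]

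The middle equality is then a one-line algebraic simplification: substituting $h_{\E\s}(x)=\dfrac{g_{\EE}(x)+g_{\EN}(x)}{1+g_{\EE}(x)}$ and $\textup{nhc}_{\EN}(x)=\dfrac{g_{\EN}(x)}{1+g_{\EE}(x)}$ from the preceding lemma and clearing the common factor $1+g_{\EE}(x)$ gives $\dfrac{g_{\EE}(x)+g_{\EN}(x)}{1+g_{\EE}(x)+g_{\EN}(x)}=\dfrac{g_{\E\s}(x)}{1+g_{\E\s}(x)}$.

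For the last equality I would first put $g_{\E\s}(x)$ in closed binomial form. By Pascal's rule,
\[ g_{\E\s}(x)=\sum_{k\ge1}\Big(\tbinom{(\alpha+1)k-2}{k}+\tbinom{(\alpha+1)k-2}{k-1}\Big)x^k=\sum_{k\ge1}\tbinom{(\alpha+1)k-1}{k}x^k, \]
and $\binom{(\alpha+1)k-1}{k}=\frac{\alpha}{\alpha+1}\binom{(\alpha+1)k}{k}$ for $k\ge1$. Combining this with the standard Fuss--Catalan identity $\sum_{k\ge0}\binom{(\alpha+1)k}{k}x^k=\dfrac{c_\alpha(x)}{(\alpha+1)-\alpha c_\alpha(x)}$ — itself a one-line application of Lagrange inversion to $c_\alpha(x)=1+xc_\alpha(x)^{\alpha+1}$ — one obtains $1+g_{\E\s}(x)=\dfrac{1}{1-\alpha(c_\alpha(x)-1)}$, and therefore $\dfrac{g_{\E\s}(x)}{1+g_{\E\s}(x)}=1-\big(1-\alpha(c_\alpha(x)-1)\big)=\alpha(c_\alpha(x)-1)$, as claimed.

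I expect the two algebraic steps to be routine; the part that needs care is the inclusion--exclusion set-up, specifically verifying that the origin $(0,0)$ and the terminal vertex $(\alpha k,k)$ are correctly left out of the list of admissible ``bad'' vertices, and that cutting at a prescribed right bounce yields a genuinely unique factorization into one $\textup{nhc}_{\EN}$-piece per bounce plus a single terminal $h_{\E\s}$-piece, so that the alternating sum really does collapse to $h_{\E\s}/(1+\textup{nhc}_{\EN})$.
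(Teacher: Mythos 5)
Your argument is correct and is precisely the ``same technique'' the paper invokes for this lemma (which it states without a written proof): inclusion--exclusion over right bounces carried out inside the no-horizontal-cross universe, with $\textup{nhc}_{\EN}$ and $h_{\E\s}$ playing the roles of $g_{\EN}$ and $g_{\E\s}$, followed by routine algebra using the preceding lemma and the Fuss--Catalan identity. All three equalities, including the cut-and-translate factorization at the marked right bounces and the closed form $1+g_{\E\s}(x)=\bigl(1-\alpha(c_\alpha(x)-1)\bigr)^{-1}$, check out.
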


Observe that any path in $\L^{\textup{nhc}}_{1/\alpha}(k)$ that starts with an $\N$-step will remain above the line $y=\frac{1}{\alpha}x$ and must necessarily end with an $\E$-step. This is precisely the set of rational Dyck paths with slope $1/\alpha$, which is enumerated by the Fuss-Catalan numbers. Thus $H_{\N\E}(x)=c_{\alpha}(x)-1$.

Once again, for $\alpha>1$ we did not find results concerning the enumeration of horizontal crosses. Only for $\alpha=2$ the OEIS contains the sequence A046646 corresponding to
\[ H_{\E\s}(x) = 2x + 6x^2 + 24x^3 + 110x^4 + 546x^5 + 2856x^6 + 15504x^7 + 86526x^8 +\cdots \]
This function enumerates certain rooted planar maps, see \cite{Br63}.

\section{Summary and final remarks}

The main focus of this paper has been the enumeration of lattice paths from $(0,0)$ to $(\alpha k,\beta k)$ by their bounces with respect to the line $y=\frac{\beta}{\alpha} x$. A complete solution is provided by the generating functions in Theorem~\ref{thm:mainGF}, all expressed in terms of the simpler binomial functions $g_{\EE}$, $g_{\EN}$, and $g_{\NN}$.

The special case $\alpha=\beta=1$ was recently studied by Pan and Remmel \cite{PanRe16} from the enumerative and combinatorial points of view. For this case, we give a connection between lattice paths with exactly $b$ bounces and standard Young tableaux of shape $(n+b,n-b-1)$, see Corollary~\ref{cor:SYT}. 

It is worth mentioning that for $b=2$ or $3$, the function given in \eqref{eq:bbouncesGF} is connected to certain permutation patterns. In fact, $x^{3} c(x)^{6}$ counts the number of permutations containing a single occurrence of an increasing subsequence of length three, cf.\ \cite{Noo96, Gud10}, and $x^{5} c(x)^{8}$ enumerates permutations with a single increasing subsequence of length three, with no two elements adjacent, cf.\ \cite{Gud10}.  

For other values of $\alpha$ and $\beta$ there seems to be very little information in the literature. Only for the case $\alpha=2$, $\beta=1$, we found connections between bounce-free paths and the enumeration of certain rooted planar maps, cf.\ \cite{Br63}. However, the fact that the case $\beta=1$ leads to the Fuss-Catalan numbers suggests connections with other combinatorial structures.



\begin{thebibliography}{9}
%
\bibitem{Br63} W.~G.~Brown, Enumeration of non-separable planar maps, \emph{Canad.~J.~Math.} \textbf{15} (1963), 526--545
%
\bibitem{Gud10} H.~H.~Gudmundsson, Dyck paths, standard Young tableaux, and pattern avoiding permutations, 
{\em Pure Math. Appl. (PU.M.A.)}, \textbf{21} (2010), no.~2, 265--284.
%
\bibitem{Noo96} J.~Noonan, The number of permutations containing exactly one increasing subsequence
of length three, \emph{Discrete Math.} \textbf{152} (1996), no.~1-3, 307--313.
%
\bibitem{OEIS}  \emph{The On-Line Encyclopedia of Integer Sequences}, published electronically at \url{https://oeis.org}, 2017.
%
\bibitem{PanRe16} R.~Pan and J.~B.~Remmel, Paired patterns in lattice paths, preprint, arXiv:1601.07988 [math.CO], 2016.
%
\bibitem{Tu63} W.~T.~Tutte, A census of planar maps, \emph{Canad.~J.~Math.} \textbf{15} (1963), 249--271
\end{thebibliography}
\end{document}